\title{Rooted Almost-binary Phylogenetic Networks for which the Maximum Covering Subtree Problem is Solvable in Linear Time} 
\titlerunning{Rooted Almost-binary Phylogenetic Networks with a Linear-Time MCSP Algorithm} 
\author{Takatora Suzuki}{Department of Pure and Applied Mathematics, Graduate School of Fundamental Science and Engineering, Waseda University, Japan}{takatora.szk@fuji.waseda.jp}{https://orcid.org/0009-0007-3425-7006}{}
\author{Han Guo}{Department of Pure and Applied Mathematics, Graduate School of Fundamental Science and Engineering, Waseda University, Japan}{}{}{}
\author{Momoko Hayamizu\footnote{Corresponding author}}{Department of Applied Mathematics, Faculty of Science and Engineering, Waseda University, Japan}{hayamizu@waseda.jp}{https://orcid.org/0000-0001-8825-6331}{This work was supported by JST FOREST Program (Grant Number JPMJFR2135, Japan).}
\authorrunning{T. Suzuki, H. Guo and M. Hayamizu} 
\keywords{Phylogenetic networks, Maximum Covering Subtree Problem, Deviation from tree-based networks, Maximal zig-zag trail decomposition} 
\definecolor{mypink}{rgb}{0.9, 0.0, 0.4}
\definecolor{mygreen}{rgb}{0.0,0.5,0.0}
\definecolor{mypurple}{rgb}{0.6,0.0,0.6}
\definecolor{myfuchsia}{rgb}{0.9,0.0,0.9}
\definecolor{mybrown}{rgb}{0.7,0.3,0.3}
\newtheorem{problem}[theorem]{Problem}
\begin{document}

\maketitle

\begin{abstract}
Phylogenetic networks are a flexible model of evolution that can represent reticulate evolution and deal with complex biological data. Tree-based networks, which are phylogenetic networks that contain a spanning tree with the same root and leaf-set as the network itself, have been intensively studied. Although tree-based networks are mathematically tractable in many ways, not all networks are tree-based. Francis--Semple--Steel (2018) thus introduced some different indices for measuring the deviation of rooted binary phylogenetic networks $N$ from being tree-based, such as the minimum number $\delta^\ast(N)$ of additional leaves needed to make $N$ tree-based, and the minimum difference $\eta^\ast(N)$ between the number of vertices of $N$ and the number of vertices of a subtree of $N$ that shares the root and leaf set with $N$. Hayamizu~(2021) has established a canonical decomposition of rooted binary (and almost-binary) phylogenetic networks of $N$, called the maximal zig-zag trail decomposition, which has many implications including a linear time  algorithm for computing $\delta^\ast(N)$. The Maximum Covering Subtree Problem (MCSP)  is the problem of computing $\eta^\ast(N)$ and Davidov \textit{et al.}~(2022) showed that this  can be solved in polynomial time (in cubic time when $N$ is binary) by an algorithm for the minimum cost flow problem. 
In this paper, under the assumption that $N$ is \emph{almost-binary} (i.e.  each internal vertex has in-degree and out-degree at most two), we show that $\delta^\ast(N)\leq \eta^\ast (N)$ holds, which is tight, and give a characterisation of such phylogenetic networks $N$ that satisfy $\delta^\ast(N)=\eta^\ast(N)$. Our approach is based on the canonical decomposition of $N$ and focuses on how the maximal W-fences (i.e. the forbidden subgraphs of tree-based networks) are connected with maximal M-fences in the network $N$. Our results introduce a new class of phylogenetic networks for which MCSP can be solved in linear time, and such networks can be viewed a generalisation of tree-based networks. 
\end{abstract}

\section{Introduction}
\label{sec:intro}
Phylogenetic trees are a standard model of evolution, which represent branching histories of species. Because of their simplicity, trees are mathematically and computationally tractable, and many methods have been developed and implemented for various computational problems. However, the process of evolution cannot be explained by branching alone when reticulation events such as horizontal gene transfer and hybridisation occur. It is therefore important to explore a more general model beyond trees in order to more accurately describe complex evolution.

Phylogenetic networks provide a more flexible model of evolution than phylogenetic trees because they allow  lineages to merge and diverge. Phylogenetic networks can be used not only in situations such as hybridisation, where two lineages merge, but also to deal with complex information, such as inconsistencies between trees inferred from different genes. If it is possible to integrate multiple trees together into a single phylogenetic network by some method, the interpretation of evolutionary scenarios can be made easier by searching for a single probable tree contained in that network. This idea can be seen, for example,  in the notion of `statistical tree of life' by O’Malley and Koonin (2011) \cite{o2011stands}. 

Although there are several ways to define phylogenetic trees within a phylogenetic network, there has been much discussion about what might be called `phylogenetic spanning trees' (also known as `subdivision trees' or `support trees') of phylogenetic networks \cite{which_phylogenetic_networks,anaya2016determining,on_tree_baesd_phylogenetic, structure_theorem,ranking_top_k,universal_tree_based,pons2019tree,van2021unifying,jetten2016nonbinary}. Following this line of research, Hayamizu (2021)~\cite{structure_theorem} established a canonical way to decompose any rooted binary phylogenetic network (and, more generally, any almost-binary network) into a unique set of subgraphs called maximal zig-zag trails. In \cite{structure_theorem}, Hayamizu used it to obtain the structure theorem for phylogenetic networks and provided linear-time and linear-delay algorithms for many computational problems related to subdivision trees, such as counting, listing, and optimisation. In addition, Hayamizu and Makino (2022) \cite{ranking_top_k} gave a linear-delay algorithm for the more general problem of generating subdivision trees sequentially from an optimal one to a $k$-th optimal one, according to the values of a certain type of objective function, such as likelihood or log-likelihood. Thus, the structure theorem has led to the development of such fast algorithms, and this has been made possible because it explicitly characterises the set of all phylogenetic spanning trees of a given network $N$. We note that it is not possible to obtain similar algorithms for the set of all phylogenetic subtrees of $N$ (i.e. the so-called `displayed' trees of $N$) rather than spanning trees. In fact, the problem of just counting the subtrees displayed by $N$ is known to be \#P-complete~\cite{counting_trees} (see e.g. \cite{kanj2008seeing, locating_a_tree} for other hardness results related to displayed trees). 

Phylogenetic networks with a phylogenetic spanning tree are called tree-based networks, but not all networks are tree-based. Francis--Semple--Steel (2018)~\cite{New_characterisations} introduced several indices to measure how far a given network $N$ deviates from being tree-based, such as the minimum number $\delta^\ast(N)$ of extra leaves that need to be added to make $N$ tree-based, and the minimum difference $\eta^\ast(N)$ between the number of vertices of $N$ and the number of vertices of a subtree of $N$ with the same root and the same set of leaves as $N$. Hayamizu \cite{structure_theorem} proved $\delta^\ast(N)=|\mathcal{W}_N|$, where $|\mathcal{W}_N|$ is the number of maximal W-fences of $N$, and provided a linear-time algorithm for computing $\delta^\ast(N)$. The problem of computing $\eta^\ast(N)$ is called the Maximum Covering Subtree Problem (MCSP), and Davidov et~al.~\cite{Maximum_Covering} showed that MCSP can be solved in polynomial time even for non-binary $N$ (cubic time if $N$ is binary) by an algorithm for solving the minimum cost flow problem.

In this paper, we study the number $\eta^\ast(N)$ of vertices of $N$ that are not covered by a maximum covering subtree, using a different approach from the one used in Davidov~\textit{et al.}~\cite{Maximum_Covering}. We use the above-mentioned canonical decomposition of $N$ into the set of unique maximal zig-zag trails, where $N$ is assumed to be binary or almost-binary. Focusing on the number $|\mathcal{W}_N|$ of maximal W-fences in $N$, we prove that $|\mathcal{W}_N|$ gives a lower bound on $\eta^\ast(N)$, i.e. $\eta^\ast(N) \geq |\mathcal{W}_N|=\delta^\ast (N)$. This lower bound is tight, i.e. there exist phylogenetic networks $N$ for which $\eta^\ast(N) = |\mathcal{W}_N|$ (e.g. tree-based networks). We provide a necessary and sufficient condition for $N$ to satisfy $\eta^\ast(N) = |\mathcal{W}_N|$, thus introducing a class of phylogenetic networks for which MCSP can be solved in linear time.  Such phylogenetic networks $N$ can be seen as a generalisation of tree-based networks.

\section{Preliminaries}\label{sec:prelim}
\subsection{Graph theoretical terminology}
In this paper, we consider only (weakly) connected, finite, simple, directed acyclic graphs which we now define. 
A \emph{directed graph} is defined as an ordered pair $(V, E)$ of a set $V$ of vertices and a set $E$ of directed edges, whereas an \emph{undirected graph} consists of a set of vertices and a set of undirected edges. An undirected graph is \emph{connected} if there exists a path between any pair of vertices, and a directed graph $G$ is \emph{(weakly) connected} if the undirected graph obtained by ignoring the direction of each edge of $G$ is connected. From now on, we will use the terms `graph' and `edge' to mean a directed graph and directed edge, respectively.  
Given a graph $G$, its vertex-set and edge-set are denoted by $V(G)$ and $E(G)$, respectively. A graph $G$ is said to be \emph{finite} if $V(G)$ and $E(G)$ are finite sets. An edge oriented from vertex $u$ to vertex $v$ is denoted by $(u, v)$. 
A graph is said to be \emph{simple} if it does not contain any loops (i.e.  edges that start and end at the same vertex) or multiple edges (i.e.  multiple directed edges between the same pair of vertices).

For an edge $(u, v)$  of a directed graph with $u\neq v$, we say that $u$ is a \emph{parent} of $v$ and $v$ a \emph{child} of $u$.  Given an edge $e=(u, v)$, we often use the notation $tail(e)$ and $head(e)$ to mean the starting point $u$ and the ending point $v$, respectively. 
For a vertex $v$ of a graph $G$, the \emph{in-degree of $v$ (in $G$)}, denoted by $indeg_G(v)$,  is defined to be the number of edges $e$ of $G$ with $head(e)=v$. Similarly, the \emph{out-degree of $v$ (in $G$)}, denoted by $outdeg_G(v)$,  is defined to be the number of edges $e$ of $G$ with $tail(e)=v$.



A \emph{(directed) path} is a directed graph $G$ that can be represented by an alternating sequence of vertices and consecutive edges $v_1, (v_1, v_2), v_2, \dots, (v_{k-1}, v_k), v_k$, where all vertices are distinct and we have $(indeg_G(v_1), outdeg_G(v_1))=(0,1)$, $(indeg_G(v_k), outdeg_G(v_k))=(1, 0)$ and $(indeg_G(v_i), outdeg_G(v_i))=(1,1)$ for any vertex $v_i$ other than $v_1, v_k$. 
A \emph{(directed) cycle} is a directed graph $G$ that can be represented by an alternating sequence of vertices and consecutive edges $v_1, (v_1, v_2), v_2, \dots, (v_{k-1}, v_k), v_k$, where all vertices are distinct, $v_1=v_k$, and $(indeg_G(v_i), outdeg_G(v_i))=(1,1)$ for any vertex $v_i$ of $G$.
For two graphs $G$ and $H$, $H$ is called a \emph{subgraph} of $G$ if $V(H) \subseteq V(G)$ and $E(H) \subseteq E(G)$. 
A graph is \emph{acyclic} if it does not contain any directed cycles.  
A \emph{(directed) tree} is a directed acyclic graph $G$ such that there exists a unique vertex $\rho$ of $G$ with $indeg_G(\rho)=0$ and for any vertex $v$ of $G$ other than $\rho$, we have $indeg_G(v)=1$. 
A subgraph $H$ of a graph $G$ is called a \emph{subtree} of $G$ if $H$ is a directed tree. A subtree $H$ of $G$ with $V(H)=V(G)$ is called a \emph{spanning tree} of $G$. 
For a vertex $v$ of a graph $G$, \emph{removing $v$} means deleting $v$ and all the edges starting or ending at $v$, and we write $G- \{v\}$ to represent the subgraph of $G$ obtained by removing $v$ from $G$.

For a graph $G = (V, E)$ and any subset $E^\prime \subset E$, $E^\prime$ is said to \emph{induce} the subgraph   $(V^\prime, E^\prime)$ of $G$, 
where $V^\prime:=\{ v\in V \mid v=head(e) \text{ or } v=tail(e) \text{ for some } e \in E^\prime  \}$. For notational simplicity, we write $G[E^\prime]$ to represent the edge-induced subgraph $(V^\prime, E^\prime)$ of $G$. For a graph $G = (V, E)$ and any partition $\{E_1, \dots ,E_n\}$ of $E$, the collection $\{ G[E_1],\dots, G[E_n]\}$ of edge-induced subgraphs  is called a \emph{decomposition} of $G$.

Two directed graphs $G$ and $H$ are said to be \emph{isomorphic} if there exists a bijection between $V(G)$ and $V(H)$ that yields a bijection between $E(G)$ and $E(H)$.  
In other words, $G$ and $H$ are isomorphic if there exists a labelling map $f: V(G)\to V(H)$ so that there is a one-to-one correspondence between $V(G)$ and $V(H)$ and for any $v_i$ and $v_j$ in $V(G)$, $(v_i, v_j)$ is in $E(G)$ if and only if  $(f(v_i),  f(v_j))$ is in $E(H)$, including the loops and parallel edges if they exist. 
For an edge $(u, v)$ of a graph $G$,  \emph{subdividing edge $(u, v)$} refers to the operation of introducing a new vertex $p$ and replacing $(u, v)$ with the new consecutive edges $(u, p)$ and $(p, v)$. Any graph that can be obtained by subdividing each edge of $G$ zero or more times is called a \emph{subdivision} of $G$. Conversely, for a vertex $p$ of a graph $G$ with $indeg_G(p) = outdeg_G(p) = 1$ and two consecutive edges $(u, p)$ and $(p, v)$, \emph{smoothing vertex $p$} refers to the operation of removing the vertex $p$ together with the edges $(u, p)$ and $(p, v)$ and then adding a new edge $(u, v)$. Two graphs are said to be \emph{homeomorphic} if they are isomorphic after smoothing all vertices of in-degree one and out-degree one.

\subsection{Phylogenetic networks}
Throughout this paper, $X=\{1,\dots, n\}$ represents a non-empty finite set, which can be biologically interpreted as a set of $n$ present-day species.  
In this paper, we slightly generalise the notion of `binary' phylogenetic $X$-networks and consider `almost-binary' phylogenetic $X$-networks. This is because, as discussed in~\cite{structure_theorem}, the structure theorem for rooted binary phylogenetic $X$-networks can be applied not only to binary but also to almost-binary $X$-networks.

\begin{definition}[See also Figure~\ref{fig:Tree,Net}]\label{phyloNet}
  A \emph{rooted almost-binary phylogenetic $X$-network} is defined to be any simple directed acyclic graph $N = (V, E)$ with the following properties:
  \begin{enumerate}
    \item There exists a unique vertex $\rho \in V$ with $indeg(\rho) = 0, outdeg(\rho) \in \{ 1, 2 \}$;
    \item There exists a bijection between $X$ and $L:=\{v\in V \mid indeg(v) = 1, outdeg(v) = 0\}$;
    \item  For any $v \in V \setminus (X \cup \{\rho\})$, $indeg(v)\in \{1, 2\}$ and $outdeg(v)\in \{1, 2\}$ hold.
  \end{enumerate} 
\end{definition}

According to Definition \ref{phyloNet}, the vertex set $V$ of a rooted almost-binary phylogenetic $X$-network consists of three types of vertices: a unique vertex $\rho$ called \emph{the root} of $N$, which represents the most recent common ancestor of the species in the set $X$; the vertices in $L$, called \emph{leaves} of $N$, that have no descendants in $N$ and thus can be identified with the present-day species in $X$; and the remaining non-root, non-leaf vertices, 
 which have one or two incoming edges and one or two outgoing edges. 



As illustrated Figure~\ref{fig:Tree,Net}, a rooted almost-binary phylogenetic $X$-network $N$ is called a rooted \emph{binary} phylogenetic $X$-network if $N$ contains neither a vertex $v$ with $indeg_N(v) = outdeg_N(v) = 2$ nor a vertex $v$ with $indeg_N(v) = outdeg_N(v) = 1$. Moreover, if $N$ does not also contain a vertex $v$ with $(indeg_G(v), outdeg_N(v))=(2, 1)$, then $N$ is called a rooted binary phylogenetic $X$-\emph{tree}.
 A vertex $v$ with $(indeg_N(v), outdeg_N(v)) = (1, 2)$ is called a \emph{tree vertex} of $N$, whereas a vertex $v$ with $(indeg_N(v), outdeg_N(v)) = (2, 1)$ is called a \emph{reticulation} of $N$.  Intuitively, a tree vertex is a branching point in an evolutionary history, whereas a reticulation can be viewed as a point where two different lineages merge. 

\begin{figure}[hbt]
  \centering
  \includegraphics[scale=0.4]{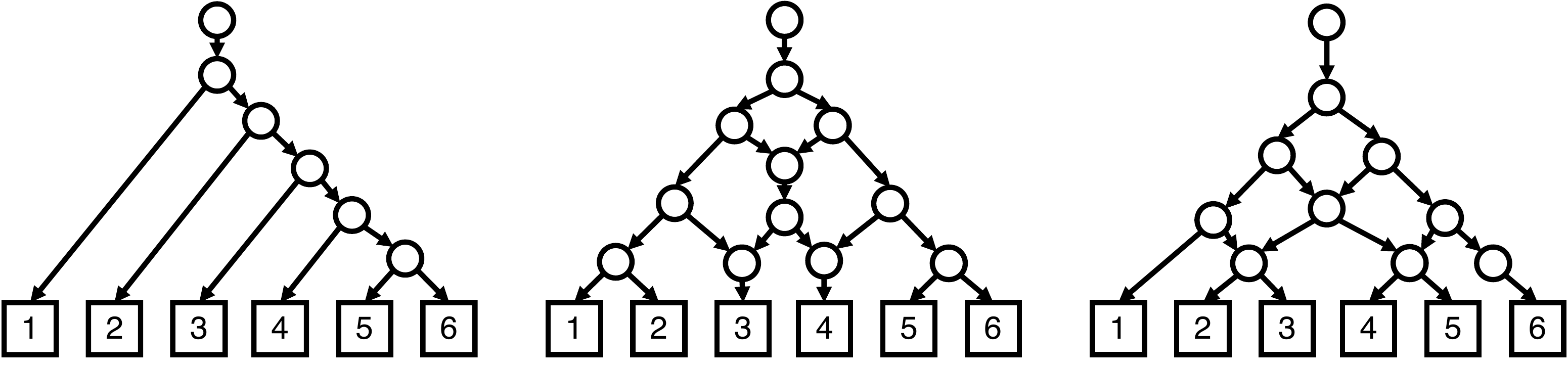}
  \caption{Examples of a rooted binary phylogenetic $X$-tree (left), a rooted binary phylogenetic $X$-network (center), a rooted almost-binaty phylogenetic $X$-network (right) where $X = \{1, \dots, 6\}$.
  \label{fig:Tree,Net}
  }
\end{figure}

\section{Maximum Covering Subtree Problem and Known Results}
In this section, we formally describe the Maximum Covering Subtree Problem (MCSP), which was originally posed by Francis--Semple--Steel~\cite{New_characterisations}, and summarise relevant known results from the work by Davidov~\textit{et al.}~\cite{Maximum_Covering}. 

We first recall the definition of tree-based networks and subdivision trees, as maximum covering subtrees are closely related to these concepts. A rooted almost-binary phylogenetic $X$-network $N$ is called a \emph{tree-based network (on $X$)} if $N$ has a spanning tree (i.e.  a subtree $T$ with $V(T)=V(N)$) that is a subdivision of some rooted binary phylogenetic $X$-tree, and in this case $T$ is called a \emph{subdivision tree} of $N$ (see Figure~\ref{fig:tree-based} for an illustration). Note that this definition implies that a subdivision tree $T$ of $N$ not only contains all vertices of $N$, but also shares the root and leaf-set $X$ with $N$. 

\begin{figure}[hbt]
  \centering
  \includegraphics[scale=0.4]{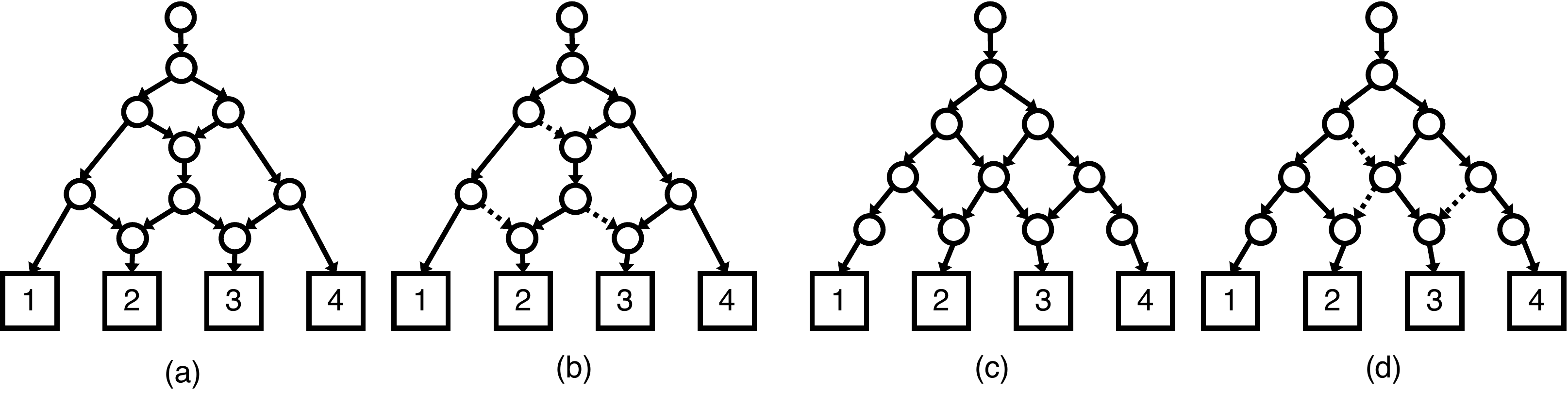}
  \caption{Examples of rooted binary/almost-binary tree-based networks and subdivion trees. A rooted binary phylogenetic $X$-network in (a) has a subdivision tree in (b), therefore the network in (a) is tree-based. Likewise, a rooted almost-binary phylogenetic $X$-network in (c) has a subdivision tree in (d), therefore the network in (c) is tree-based.
  \label{fig:tree-based}
  }
\end{figure}


A \emph{covering subtree} of a rooted (not necessarily binary) phylogenetic $X$-network $N$ is defined to be a subtree whose root and leaf-set is the same as those of $N$. A covering subtree of $N$ with the largest number of vertices is called a \emph{maximum} covering subtree of $N$. We now state the problem as follows. See also Figure~\ref{fig:MCST} for an illustration.



\begin{problem}[Maximum Covering Subtree Problem]\label{prob}
  Given a rooted (not necessarily binary) phylogenetic $X$-network $N$,  compute the number $\eta^\ast(N):=|V(N)|-|V(T)|$, where $T$ denotes a maximum covering subtree of $N$. 
\end{problem}

\begin{figure}[h]
  \centering
  \includegraphics[scale=0.4]{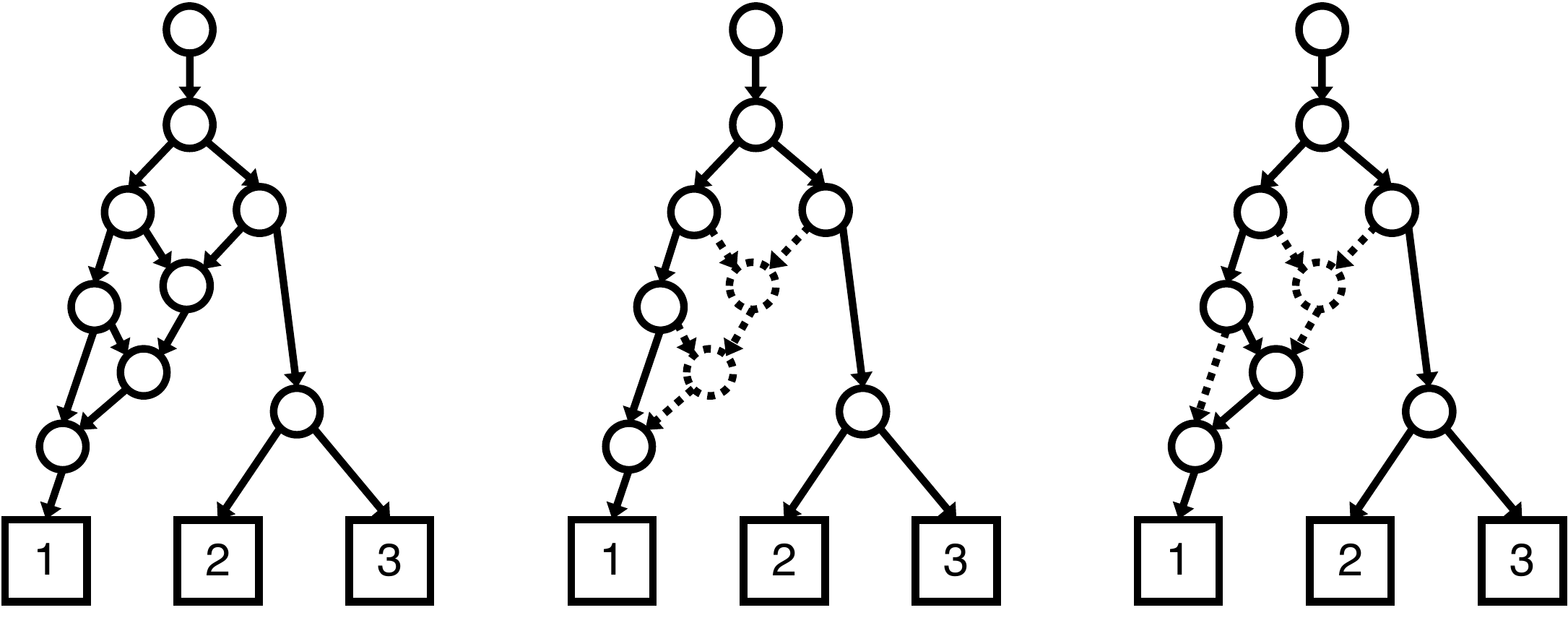}
  \caption{An illustration of Problem~\ref{prob}. Left: A rooted almost-binary phylogenetic $X$-network $N$. Middle: A covering subtree with 10 vertices. Right: A maximum covering subtree $T$ with 11 vertices. In this case, $\eta^\ast(N):=|V(N)|-|V(T)|=1$. 
  \label{fig:MCST}}
\end{figure}

Problem~\ref{prob} asks for the number $\eta^*(N)$ of vertices of $N$ that are not covered by a maximum covering subtree of $N$.
We note that $\eta^*(N)=0$ holds if and only if $N$ is a tree-based network and thus $\eta^*(N)$ can be interpreted as quantifying how far $N$ deviates from being tree-based.
Davidov~\textit{et al.}~\cite{Maximum_Covering} provided an algorithm that solves Problem~\ref{prob} (and also finds a maximum covering subtree $T$) in $O(|V|^2 |E|)$ time. Under the assumption that the input network $N$ is almost-binary, the algorithm runs in $O(|V|^3)=O(|E|^3)$ time.

\section{Canonical Decomposition of Rooted Almost-Binary Phylogenetic $X$-Networks and its Implications for Tree-based Networks}
We recall the relevant definitions and results that will be used in this paper from \cite{structure_theorem} without providing proofs, thus ensuring our exposition is self-contained. Although the results in \cite{structure_theorem} were originally intended for rooted binary phylogenetic $X$-networks, all of them apply to rooted almost-binary phylogenetic $X$-networks as well, as pointed out in Section 6 of \cite{structure_theorem}.

\begin{definition}
	For a rooted almost-binary phylogenetic $X$-network $N$, a \emph{zig-zag trail in $N$} is defined to be a connected subgraph $Z$ of $N$ with $|E(Z)|\geq 1$, such that there is a labelling $(e_1,\dots, e_m)$ of the edges of $Z$ where any adjacent edges $e_i$ and $e_{i+1}$ of $Z$ satisfy either ${\it head}(e_i)={\it head}(e_{i+1})$ or ${\it tail}(e_i)={\it tail}(e_{i+1})$. A zig-zag trail $Z$ in $N$ is said to be \emph{maximal} if $N$ contains no zig-zag trail $Z^\prime$ such that $Z$ is a proper subgraph of $Z^\prime$. Note that a maximal zig-zag trail may consist of a single edge. 
\end{definition}

Any zig-zag trail $Z$ in a rooted binary network $N$ can be denoted by an alternating sequence of (not necessarily distinct) vertices and distinct edges. However, if we drop the edges from the notation, we can more concisely express $Z$ as  $v_1>v_2<v_3>v_4< \cdots > v_{m-1} < v_m$ (or in reverse order, $v_m>v_{m-1}< \cdots > v_4 < v_3 > v_2 < v_1$). We sometimes even write $(e_1,\dots, e_m)$ when there is no confusion without specifying the edge directions. Every maximal zig-zag trail falls into one of the four types defined in Definition~\ref{def:zig-zags.4types} (see also  Figure~\ref{fig:zig-zag-trail}). A similar definition is also found in  \cite{on_tree_baesd_phylogenetic}. 


\begin{definition}\label{def:zig-zags.4types}
Let $N$ be a rooted almost-binary phylogenetic $X$-network and let $Z$ be a maximal zig-zag trail in $N$. The four types of maximal zig-zag trails is defined as follows:
\begin{itemize}
	\item $Z$ is  a \emph{crown} if $Z$ has an even number $|E(Z)|\geq 4$ of edges and $Z$ can be represented as \\ $v_1 < v_2 > v_3 < \cdots > v_{2k-1} < v_{2k} > v_{2k+1}=v_1$. 
	\item  $Z$ is a \emph{M-fence} if $Z$ has an even number $|E(Z)|\geq 2$ of edges and $Z$ can be represented as \\ $v_1 < v_2 > v_3 < \cdots > v_{2k-1} < v_{2k} > v_{2k+1}\neq v_1$.  
	\item $Z$ is  an \emph{N-fence} if $Z$ has an odd number $|E(Z)|\geq 1$ of edges and $Z$ can be represented as \\ $v_1 < v_2 > v_3 < \cdots > v_{2k-1} < v_{2k}$.
	\item  $Z$ is a \emph{W-fence} if $Z$ has an even number $|E(Z)|\geq 2$ of edges and $Z$ can be represented as \\ $v_1 > v_2 < v_3 > \cdots < v_{2k-1} > v_{2k} < v_{2k+1}$.  
\end{itemize}
\end{definition}

\begin{figure}[hbt]
  \centering
  \includegraphics[scale=0.5]{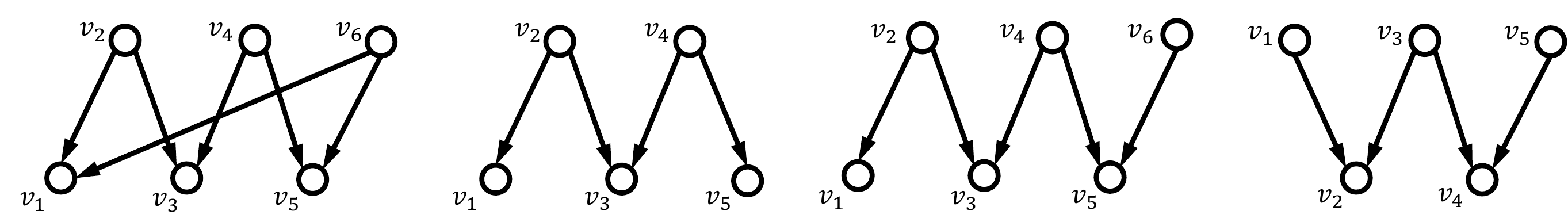}
  \caption{An illustration of the four types of maximal zig-zag trails in rooted almost-binary phylogenetic $X$-networks in Definition~\ref{def:zig-zags.4types} (from left to right: a crown, an M-fence, an N-fence, and a W-fence). 
   \label{fig:zig-zag-trail}}
\end{figure}

\begin{remark}\label{rem:atypical.fence}
	We note that fences do not necessarily have to have the shapes of the letters M, N, and W as described in Figure~\ref{fig:zig-zag-trail}. For instance, see the M-fence $Z_3$ in the network on the right of Figure~\ref{fig:zig-zag-trail-decomposition}.
\end{remark}

In this paper, our entire argument will be based on the following Theorem~\ref{uniquely.decomposable}, 
which provides a canonical way of decomposing any rooted almost-binary phylogenetic $X$-network into a unique set of maximal zig-zag trails. Theorem~\ref{uniquely.decomposable} was originally proved for binary networks in \cite{structure_theorem}, but as was noted in Section 6 of \cite{structure_theorem}, all the arguments, statements and algorithms in \cite{structure_theorem} hold true for any almost-binary networks. Therefore, we here do not repeat the proof of Theorem~\ref{uniquely.decomposable} although it is short and straightforward. The interested reader may refer to the proof of Theorem~4.2 in \cite{structure_theorem}.


 \begin{theorem}[\cite{structure_theorem}]\label{uniquely.decomposable}
 For any  rooted almost-binary phylogenetic $X$-network $N$,  there exists a unique decomposition $\mathcal{Z}=\{Z_1,\dots,Z_k\}$  of $N$ such that each $Z_i\in \mathcal{Z}$ is a maximal zig-zag trail in $N$ (see Figure~\ref{fig:zig-zag-trail-decomposition} for an illustration). 
\end{theorem}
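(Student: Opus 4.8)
The plan is to realise the desired decomposition concretely as the connected components of an auxiliary graph on the edge set of $N$, and then to read off both existence and uniqueness from the elementary structure of that graph. First I would define the \emph{edge-adjacency graph} $\mathcal{A}(N)$ to have vertex set $E(N)$, where two distinct edges $e,e'$ of $N$ are joined in $\mathcal{A}(N)$ exactly when $head(e)=head(e')$ or $tail(e)=tail(e')$. The point at which almost-binariness enters is the claim that $\mathcal{A}(N)$ has maximum degree at most $2$: for a fixed edge $e$ there is at most one edge $e'\neq e$ with $head(e')=head(e)$ (because $indeg_N(head(e))\le 2$) and at most one edge $e''\neq e$ with $tail(e'')=tail(e)$ (because $outdeg_N(tail(e))\le 2$), and $e'\neq e''$ since $N$ is simple and hence has no parallel edges. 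Consequently every connected component of $\mathcal{A}(N)$ is an isolated vertex, a path, or a cycle.

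Next I would verify that each component $C$ of $\mathcal{A}(N)$ yields a maximal zig-zag trail $Z_C:=N[E_C]$, where $E_C$ denotes the set of edges of $N$ that are the vertices of $C$. Traversing $C$ (as a path, a cycle, or a lone vertex) produces an ordering $(e_1,\dots,e_m)$ of the edges of $Z_C$ in which consecutive $e_i, e_{i+1}$ share a head or a tail in $N$; in particular consecutive edges share a vertex of $N$, so $Z_C$ is connected with $|E(Z_C)|=m\ge 1$, i.e. $Z_C$ is a zig-zag trail. For maximality I would use the observation that whenever $Z'$ is a zig-zag trail in $N$ with edge labelling $(f_1,\dots,f_\ell)$, the sequence $(f_1,\dots,f_\ell)$ is a walk in $\mathcal{A}(N)$, so $E(Z')$ lies inside a single component of $\mathcal{A}(N)$. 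Hence any zig-zag trail containing $Z_C$ has its edge set contained in $E_C$, hence equal to $E_C$; and since a zig-zag trail has no isolated vertices it equals the subgraph induced by its edge set, so such a trail must be $Z_C$ itself.

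This already establishes existence, since the components of $\mathcal{A}(N)$ partition $E(N)$: the family $\mathcal{Z}:=\{\,Z_C : C \text{ a component of } \mathcal{A}(N)\,\}$ is a decomposition of $N$ into maximal zig-zag trails. For uniqueness, let $\{Z_1,\dots,Z_k\}$ be an arbitrary decomposition of $N$ into maximal zig-zag trails. By the walk observation each $E(Z_i)$ sits inside some component $C_i$ of $\mathcal{A}(N)$, so $Z_i$ is a subgraph of the zig-zag trail $Z_{C_i}$; maximality of $Z_i$ then forces $Z_i=Z_{C_i}$, and in particular $E(Z_i)=E_{C_i}$. Since $\{E(Z_1),\dots,E(Z_k)\}$ and $\{E_C : C \text{ a component of }\mathcal{A}(N)\}$ are both partitions of $E(N)$ and each $E(Z_i)$ is one of the blocks $E_C$, the $C_i$ must exhaust all components of $\mathcal{A}(N)$, whence $\{Z_1,\dots,Z_k\}=\mathcal{Z}$.

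I expect the only genuinely delicate step to be the walk observation that underpins both maximality and uniqueness: one has to be careful that ``adjacent edges of a zig-zag trail'' means consecutive in its labelling, which is strictly stronger than merely sharing a vertex of $N$, so that a zig-zag trail cannot slip from one component of $\mathcal{A}(N)$ into another through a vertex at which one of its edges comes in while another goes out. Everything else is routine bookkeeping: the degree bound, the decomposition of maximum-degree-$2$ graphs into paths and cycles, and the manipulation of partitions of $E(N)$. This is in essence the argument behind Theorem~4.2 of~\cite{structure_theorem}, which carries over to almost-binary networks because the in- and out-degree bounds it relies on are unchanged.
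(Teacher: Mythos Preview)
Your proof is correct. The paper does not actually prove this theorem; it refers the reader to Theorem~4.2 of \cite{structure_theorem} and notes that the argument carries over unchanged to almost-binary networks. Your edge-adjacency graph $\mathcal{A}(N)$ is a clean repackaging of that argument: the original proof observes that, thanks to the in- and out-degree bounds, each edge $e$ has at most one other edge sharing its head and at most one other edge sharing its tail, so the zig-zag trail through $e$ extends uniquely at each end and hence every edge lies in exactly one maximal zig-zag trail. This is precisely your degree-$\le 2$ bound on $\mathcal{A}(N)$ together with the walk observation, so the two arguments are essentially the same.
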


The unique decomposition of $\mathcal{Z}$ of $N$ in Theorem~\ref{uniquely.decomposable} is called \emph{the maximal zig-zag trail decomposition}  of $N$. Using such a canonical decomposition, Hayamizu \cite{structure_theorem} provided an explicit characterisation of the set of all subdivision trees (spanning trees with leaf-set $X$) in a given tree-based phylogenetic network $N$ on $X$ as a direct product of families of sets of possible edge-choices within each subgraph $Z_i$ of $N$. 

Such a unique decomposition and characterisation in direct product form is in the spirit of various structure theorems in mathematics, such as the structure theorem for finite Abelian groups, which states that every finite Abelian group can be uniquely decomposed as a direct product of cyclic groups. In light of this, when we speak of the ``structure theorem'' for rooted binary (or almost-binary) phylogenetic networks, it would be appropriate to refer to both the uniqueness of the decomposition (Theorem~\ref{uniquely.decomposable}) and the direct-product  characterisation of the set of all subdivision trees (Theorem~4.8 in \cite{structure_theorem}). 
For convenience, however, we often use the name ``structure theorem'' to refer only to Theorem~\ref{uniquely.decomposable} (and this was the case even in the original paper~\cite{structure_theorem}).  In this paper we  only need Theorem~\ref{uniquely.decomposable}, so we do not restate the full statement of the structure theorem here.  For readers interested in the rest of the structure theorem, we refer to Theorem~4.8 in \cite{structure_theorem}.

\begin{figure}[hbt]
  \centering
\includegraphics[scale=0.5]{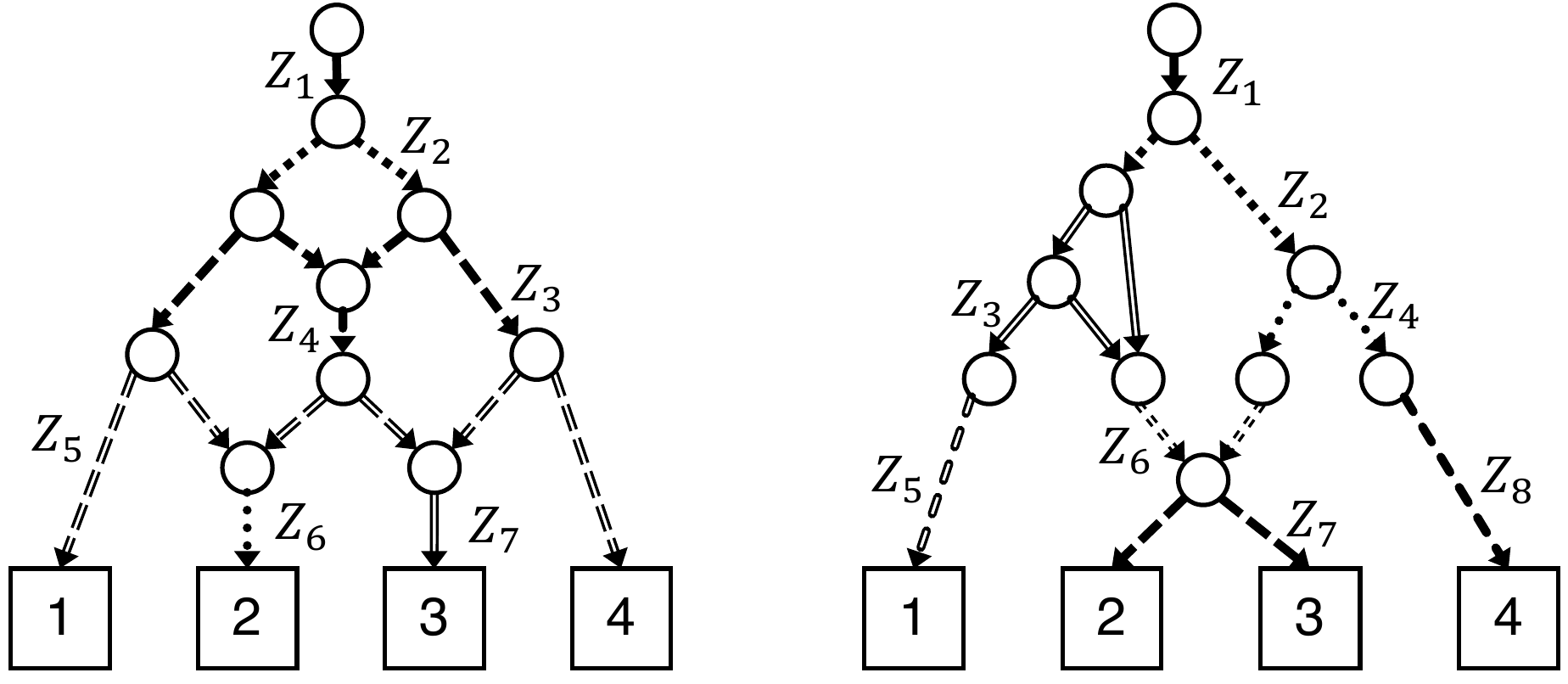}
  \caption{An illustration of the maximal zig-zag trail decomposition of a rooted binary tree-based $X$-network (left) and a rooted almost-binary phylogenetic $X$-network (right). 
  \label{fig:zig-zag-trail-decomposition}
  }
\end{figure}

%
%
%


The next Proposition~\ref{spanning} has been stated and proved in independent studies using different proofs and terminology. 

\begin{proposition}[(Zhang (2016) \cite{on_tree_baesd_phylogenetic}, Jetten and van Iersel (2016) \cite{jetten2016nonbinary}, Hayamizu (2021) \cite{structure_theorem})]\label{spanning}
	Let $N$ be a rooted binary phylogenetic $X$-network  and  $\mathcal{Z}=\{Z_1,\dots,Z_k\}$  be the maximal zig-zag trail decomposition of $N$.  Then, $N$ is a tree-based network on $X$ if and only if no element $Z_i\in \mathcal{Z}$ is a W-fence.
\end{proposition}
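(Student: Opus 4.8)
The plan is to prove the two implications directly, in each case reasoning about which edges a subdivision tree of $N$ must, or may, contain, and exploiting the maximality of the zig-zag trails. Two elementary facts about any subdivision tree $T$ of $N$ will be used throughout. First, since $T$ is a spanning tree rooted at $\rho$, every non-root vertex has in-degree exactly $1$ in $T$, so for each reticulation of $N$ exactly one of its two in-edges lies in $T$. Second, since $T$ has leaf-set $X$, every non-leaf vertex $v$ of $N$ satisfies $outdeg_T(v)\ge 1$; in particular, a non-leaf vertex $v$ with $outdeg_N(v)=1$ must keep its unique out-edge in $T$.

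For the direction ``$N$ tree-based $\Rightarrow$ no $Z_i$ is a W-fence'' I would argue the contrapositive. Let $Z=v_1>v_2<v_3>\cdots<v_{2k+1}$ be a maximal W-fence and suppose, for contradiction, that $T$ is a subdivision tree of $N$. The first step is a maximality argument: the only trail edge at the endpoint $v_1$ is the out-edge $(v_1,v_2)$, and the reticulation $v_2$ already has both of its in-edges in $Z$, so if $v_1$ had a second out-edge then $Z$ could be extended; hence $outdeg_N(v_1)=1$, and symmetrically $outdeg_N(v_{2k+1})=1$. By the two facts above, $(v_1,v_2)$ is then forced into $T$. The second step propagates this along $Z$: $v_2$ keeps only $(v_1,v_2)$, so $(v_3,v_2)\notin T$; but $v_3$ is a tree vertex whose only out-edges are $(v_3,v_2)$ and $(v_3,v_4)$, forcing $(v_3,v_4)\in T$; iterating, a short induction yields that $(v_{2j-1},v_{2j})$ is the in-edge of $v_{2j}$ in $T$ for every $j\le k$, so $(v_{2k+1},v_{2k})\notin T$ --- contradicting that $(v_{2k+1},v_{2k})$ must lie in $T$. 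I expect this maximality step, which is what pins the endpoint degrees down, to be the only genuine subtlety; note that the case $k=1$ requires no separate treatment, being exactly the start of the induction.

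For the converse, ``no $Z_i$ is a W-fence $\Rightarrow$ $N$ tree-based'', I would construct a subdivision tree explicitly, one trail at a time. The key structural point, which follows from the uniqueness of the maximal zig-zag trail decomposition (Theorem~\ref{uniquely.decomposable}), is that the two in-edges of any reticulation, the two out-edges of any tree vertex, and the out-edges of $\rho$ each lie entirely within a single trail; hence the constraints ``each reticulation keeps exactly one in-edge'' and ``each non-leaf vertex keeps at least one out-edge'' decouple across $Z_1,\dots,Z_k$. It then suffices to prescribe, within each trail that is a crown, an M-fence, or an N-fence, a choice of one in-edge per reticulation satisfying these constraints locally: for a crown or an M-fence either of the two ``alternating'' selections works, while for an N-fence exactly one does, namely the one in which the endpoint of out-degree $1$ in $N$ (whose existence is again a maximality fact) keeps its out-edge; a short case check over the three shapes confirms that no tree vertex --- nor $\rho$ --- is thereby left with out-degree $0$. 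Finally I would take $T$ to be the subgraph of $N$ consisting of all retained edges and verify it is a subdivision tree: $T$ is acyclic (a subgraph of $N$), has in-degree $0$ at $\rho$ and in-degree $1$ at every other vertex --- so following parents shows every vertex is reachable from $\rho$ and $T$ is a spanning tree --- it has leaf-set $X$ by construction, and since $N$ is binary, smoothing the degree-$(1,1)$ vertices of $T$ produces a rooted binary phylogenetic $X$-tree of which $T$ is a subdivision. The part needing care here is the bookkeeping in the case check that no tree vertex (or the root) loses both of its out-edges under the chosen selection.
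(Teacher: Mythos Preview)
The paper does not supply its own proof of this proposition; it is quoted as a known result from \cite{on_tree_baesd_phylogenetic,jetten2016nonbinary,structure_theorem}, so there is nothing in the present paper to compare your attempt against.

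That said, your argument is correct and is essentially the standard proof found in those references (in particular in \cite{structure_theorem}): the forward direction by propagating forced edge-choices along a maximal W-fence starting from an endpoint of out-degree~$1$, and the converse by making an admissible alternating selection of in-edges independently within each crown, M-fence and N-fence, using that the two in-edges of any reticulation and the two out-edges of any tree vertex lie in the same maximal trail. One small wording issue in the forward direction: an interior upper vertex $v_{2j+1}$ of a W-fence need not literally be a \emph{tree vertex}, since one of them could be the root $\rho$; but $\rho$ is non-leaf and has both out-edges in $Z$, so the same ``must retain at least one out-edge'' constraint applies and your propagation step is unaffected.
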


Proposition~\ref{spanning} is just one of many interesting consequences of Theorem~\ref{uniquely.decomposable} (see \cite{structure_theorem, ranking_top_k} for many other results and various linear-time algorithms  derived from Theorem~\ref{uniquely.decomposable}) but is worth recalling  before considering the Maximum Covering Subtree Problem (Problem~\ref{prob}). This is because Proposition~\ref{spanning} means that  $\eta^\ast(N)=0$ holds if and only if the number $|\mathcal{W}_N|$ of maximal W-fences in $N$ is exactly zero. This implies that if $N$ is a tree-based network, then we have $|\mathcal{W}_N|=\eta^\ast(N)=0$. 

Since the number $|\mathcal{W}_N|$ of maximal W-fences in $N$ can be computed in linear time \cite{structure_theorem}, a natural question arises as to when $|\mathcal{W}_N|=\eta^\ast(N)$ holds.  
In Section~\ref{sec:results}, we will provide a necessary and sufficient condition for when $|\mathcal{W}_N|=\eta^\ast(N)$ holds  (Theorem~\ref{equality}). 

\section{Results: New Approach for MCSP using Canonical Decomposition of Phylogenetic Networks}\label{sec:results}

In this section, we will use the notation $\mathcal{W}_N$ and $\mathcal{M}_N$ to represent the sets of maximal W-fences and maximal M-fences in $N$.  Also, since we will only consider maximal zig-zag trails, when no confusion arises, the word ``maximal'' may be omitted from the description of crowns, M-fences, N-fences and W-fences in $N$.
In Section~\ref{subsec:ineq}, we  prove $|\mathcal{W}_N|\leq \eta^\ast(N)$  (Theorem~\ref{infimum_of_eta}).  In  Section~\ref{subsec:eq}, we provide a necessary and sufficient condition for when $|\mathcal{W}_N|=\eta^\ast(N)$ holds (Theorem~\ref{equality}).

%

\subsection{Bounding $\eta^\ast(N)$ from below by the number $|\mathcal{W}_N|$ of maximal W-fences}\label{subsec:ineq}

As mentioned earlier in Remark~\ref{rem:atypical.fence}, a zig-zag trail is not necessarily a bipartite graph as depicted in Figure~\ref{fig:zig-zag-trail}. In the case of $Z_3$ on the right of Figure~\ref{fig:zig-zag-trail-decomposition}, for example, there is no obvious vertical relationship between the vertices of $Z_3$. However, since any maximal zig-zag trail $Z_i$ can be expressed as in  Definition~\ref{def:zig-zags.4types}, 
we can consider `upper' and `lower' vertices of $Z_i$ that are formally defined in Definition~\ref{upper,lower}.

%


\begin{definition}\label{upper,lower}
Let $Z$ be a maximal zig-zag trail in a rooted almost-binary phylogenetic network $N$ as described in Definition~\ref{def:zig-zags.4types}. 
If $Z$ is a W-fence, we call each odd-labelled vertex $v_{2i-1}$ (resp. even-labelled vertex  $v_{2i}$) of $Z$ an \emph{upper vertex} (resp. \emph{lower vertex}) of $Z$. Otherwise, we call each even-labelled vertex  $v_{2i}$ (resp. odd-labelled vertex  $v_{2i-1}$) of $Z$ an \emph{upper vertex} (resp. \emph{lower vertex}) of $Z$. 
The set of upper vertices and the set of lower vertices of $Z$ is  denoted by $V_{u}(Z)$ and by $V_{\ell}(Z)$, respectively.
\end{definition}

By Definition \ref{upper,lower}, each upper vertex $v$ of a maximal zig-zag trail $Z$ satisfies  $outdeg_{Z} (v) \geq 1$, and each lower vertex $v$ of $Z$ satisfies  $indeg_{Z} (v) \geq 1$. Therefore, an upper vertex $v$ of a maximal zig-zag trail $Z$ can be a lower vertex of $Z$ at the same time (see the above-mentioned $Z_3$ in Figure~\ref{fig:zig-zag-trail-decomposition}). 
The next proposition follows from Definition~\ref{def:zig-zags.4types}. 
\begin{proposition}\label{four_types_of_zig-zag_trail}
 Let $Z$ be a maximal zig-zag trail in a rooted almost-binary phylogenetic network $N$. Then, 
\begin{align}\label{eq:diff}
  |V_\ell(Z)| - |V_u (Z)| = \begin{cases}
    1 &\text{if $Z$ is an M-fence;}\\
    0 &\text{if $Z$ is a crown or an N-fence;}\\
    -1 &\text{if $Z$ is a W-fence.}
  \end{cases}
\end{align}
%

\end{proposition}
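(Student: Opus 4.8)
The plan is to verify the identity directly from the explicit representations of the four types of maximal zig-zag trails given in Definition~\ref{def:zig-zags.4types}, combined with the labelling of upper and lower vertices in Definition~\ref{upper,lower}. In each case the representation tells us exactly how many vertices appear in the alternating sequence and which of them are odd-labelled versus even-labelled; the only subtlety is that some vertices may coincide (a zig-zag trail need not be bipartite, cf.\ Remark~\ref{rem:atypical.fence}), so I must be careful to count \emph{distinct} vertices rather than positions in the sequence, and to check that the identification of upper and lower vertices is consistent when this happens.

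First I would handle the fences, since these are paths (no repeated vertices). For an M-fence $Z$ represented as $v_1 < v_2 > v_3 < \cdots > v_{2k-1} < v_{2k} > v_{2k+1}$ with all $v_i$ distinct and $v_{2k+1}\neq v_1$, there are $2k+1$ vertices: $k+1$ with odd labels ($v_1,v_3,\dots,v_{2k+1}$) and $k$ with even labels ($v_2,\dots,v_{2k}$). Since $Z$ is not a W-fence, Definition~\ref{upper,lower} designates the even-labelled vertices as upper and the odd-labelled as lower, so $|V_\ell(Z)|-|V_u(Z)| = (k+1)-k = 1$. The W-fence case is symmetric: the representation $v_1 > v_2 < \cdots > v_{2k} < v_{2k+1}$ again has $k+1$ odd-labelled and $k$ even-labelled distinct vertices, but now Definition~\ref{upper,lower} makes the odd-labelled vertices upper and the even-labelled vertices lower, giving $|V_\ell(Z)|-|V_u(Z)| = k-(k+1) = -1$. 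The N-fence $v_1 < v_2 > \cdots < v_{2k}$ has $2k$ distinct vertices, $k$ odd-labelled (lower) and $k$ even-labelled (upper), so the difference is $0$.

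The crown case requires slightly more care because of the closure condition $v_{2k+1}=v_1$. Here the representation is $v_1 < v_2 > v_3 < \cdots < v_{2k} > v_{2k+1}=v_1$, and I would argue that $v_1,\dots,v_{2k}$ are pairwise distinct (this follows from maximality together with the structure of a zig-zag trail, or can be cited as part of Definition~\ref{def:zig-zags.4types}), so the crown has exactly $2k$ distinct vertices. Among these, the $k$ even-labelled vertices $v_2,v_4,\dots,v_{2k}$ are upper and the $k$ odd-labelled vertices $v_1,v_3,\dots,v_{2k-1}$ are lower (note $v_1$ is counted once, consistently, as it has odd label), so $|V_\ell(Z)|-|V_u(Z)|=k-k=0$.

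The main obstacle, such as it is, is bookkeeping rather than mathematical depth: I need to make sure that when two positions in the sequence represent the same vertex of $N$ (as can happen for M-fences, N-fences, and W-fences in the atypical shapes of Remark~\ref{rem:atypical.fence}), that vertex receives a well-defined label as upper or lower, so that the counts $|V_u(Z)|$ and $|V_\ell(Z)|$ are unambiguous and the subtraction above is legitimate. For fences this is immediate since they are paths and hence have no repeated vertices; the coincidence $v_1 = v_{2k+1}$ in the crown case is the only identification, and it is between two positions of the same parity, so no inconsistency arises. Once this is checked, the identity~\eqref{eq:diff} follows by the elementary count in each of the four cases.
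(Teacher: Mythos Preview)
Your case-by-case count is the natural approach and matches what the paper intends (the paper gives no proof beyond asserting that the proposition ``follows from Definition~\ref{def:zig-zags.4types}''). However, there is a genuine gap in the bookkeeping. You resolve the repeated-vertex issue by asserting that ``for fences this is immediate since they are paths and hence have no repeated vertices,'' and this is false. The paper is explicit that the vertex sequence of a zig-zag trail is ``not necessarily distinct,'' and immediately after Definition~\ref{upper,lower} it notes (with the M-fence $Z_3$ of Figure~\ref{fig:zig-zag-trail-decomposition} as witness) that an upper vertex of a fence can simultaneously be a lower vertex of the same fence. Fences are trails, not paths, and $V_u(Z)\cap V_\ell(Z)$ can be non-empty.

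This does not break your arithmetic, but the justification has to change. What you actually need is that the vertices occurring at odd-labelled positions are pairwise distinct (so that $|V_\ell(Z)|$ for an M-fence really is $k+1$, not merely the number of odd positions), and likewise for the even-labelled positions. This follows from the almost-binary hypothesis: in an M-fence, each interior odd-labelled vertex is the head of two distinct edges of $Z$ and each end odd-labelled vertex is the head of one; since $indeg_N(v)\le 2$, no vertex can occupy two odd positions without being the head of at least three distinct edges of $Z$ (and $v_1=v_{2k+1}$ is excluded by the M-fence definition). The even-labelled positions are handled symmetrically via $outdeg_N(v)\le 2$, and the same argument covers N-fences, W-fences, and crowns. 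With this in hand, a vertex appearing at both an odd and an even position is counted once in each of $V_\ell(Z)$ and $V_u(Z)$ and contributes zero to their difference, so your counts in each case go through unchanged.
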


\begin{figure}[hbt]
  \centering
  \includegraphics[scale=0.6]{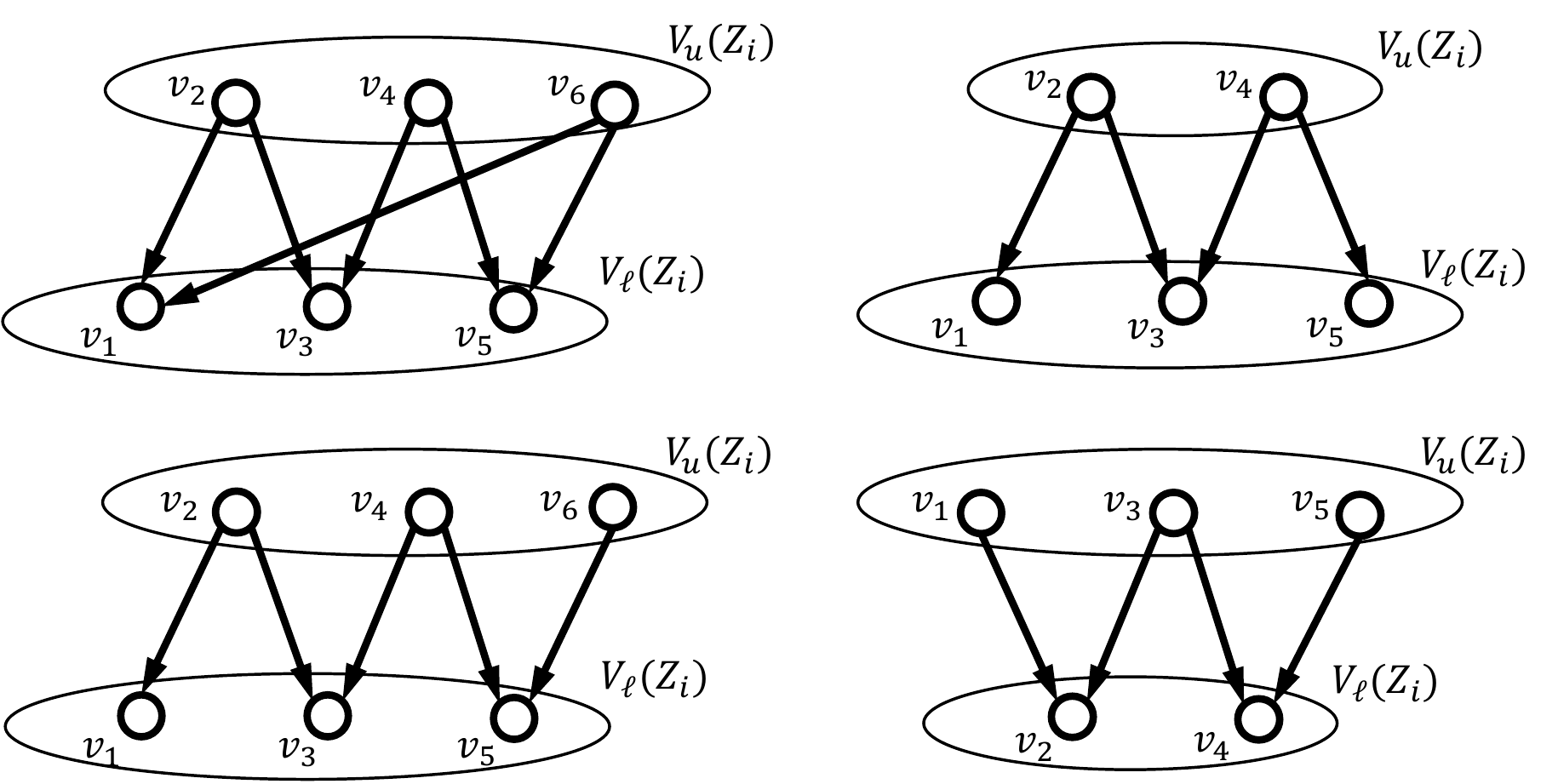}
  \caption{An illustration of Proposition~\ref{four_types_of_zig-zag_trail}: upper and lower vertices in a crown (top left), an M-fence (top right), an N-fence (bottom left) and a W-fence (bottom right).}
  \label{fig:upper, lower}
\end{figure}

\begin{proposition}\label{number_of_U/L_vertices_in_MCST}
   For a rooted almost-binary phylogenetic $X$-network $N$, let $T$ be a maximum covering subtree of $N$. Then, for any maximal zig-zag trail $Z$, $|V_u(Z)\cap V(T)| \leq |V_\ell(Z) \cap V(T)|$ holds.
\end{proposition}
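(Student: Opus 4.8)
The plan is to reduce the claimed inequality to a count of those edges of $Z$ that survive in $T$. Write $E_0:=E(Z)\cap E(T)$. By Definition~\ref{upper,lower} and the remark following it, every edge $e$ of $Z$ satisfies $\mathit{tail}(e)\in V_u(Z)$ and $\mathit{head}(e)\in V_\ell(Z)$, and every edge of $T$ has both endpoints in $V(T)$; hence
\[
|E_0|=\sum_{u\in V_u(Z)\cap V(T)}\bigl|\{e\in E_0:\mathit{tail}(e)=u\}\bigr|=\sum_{\ell\in V_\ell(Z)\cap V(T)}\bigl|\{e\in E_0:\mathit{head}(e)=\ell\}\bigr|.
\]
I will prove the two inequalities $|V_u(Z)\cap V(T)|\leq|E_0|$ and $|E_0|\leq|V_\ell(Z)\cap V(T)|$, which together give the statement.

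The bound $|E_0|\leq|V_\ell(Z)\cap V(T)|$ is the easy half: each lower vertex $\ell$ of $Z$ has $\mathit{indeg}_Z(\ell)\geq 1$, hence $\mathit{indeg}_N(\ell)\geq 1$, so $\ell\neq\rho$; since $T$ is a directed tree rooted at $\rho$, every vertex of $T$ other than $\rho$ has in-degree exactly $1$ in $T$, so at most one edge of $E_0$ enters each $\ell\in V_\ell(Z)\cap V(T)$.

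The bound $|V_u(Z)\cap V(T)|\leq|E_0|$ is the crux, and the step I expect to be the main obstacle. It suffices to show that every $u\in V_u(Z)\cap V(T)$ has an outgoing edge of $Z$ lying in $T$. Since $\mathit{outdeg}_Z(u)\geq 1$, $u$ is not a leaf of $N$; because $T$ is a covering subtree, its out-degree-$0$ vertices are exactly the leaves of $N$, so $\mathit{outdeg}_T(u)\geq 1$. The danger is that a $T$-out-edge of $u$ might lie outside $Z$, and I would rule this out using the maximality of $Z$. If $\mathit{outdeg}_Z(u)=2$ there is nothing to do, because then $\mathit{outdeg}_N(u)=2$ and both out-edges of $u$ already lie in $Z$. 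If $\mathit{outdeg}_Z(u)=1$, let $e$ be the unique edge of $Z$ leaving $u$ and fix an edge-labelling $(e_1,\dots,e_m)$ of $Z$ with $e=e_j$; an interior edge of such a labelling always shares its tail with a neighbour (otherwise it would share a head with both neighbours, forcing in-degree $\geq 3$ in $N$), so $\mathit{outdeg}_Z(u)=1$ forces $j\in\{1,m\}$. Were $u$ to have a second out-edge $f$ in $N$, then $f\notin E(Z)$, and placing $f$ at the corresponding end of the labelling it shares its tail with $e$, so $N[E(Z)\cup\{f\}]$ would be a zig-zag trail properly containing $Z$, contradicting maximality. Hence $\mathit{outdeg}_N(u)=1$, so the unique out-edge of $u$ lies in $Z$ and therefore in $T$. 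In every case $u$ has an out-edge of $Z$ in $T$, and summing over $u$ yields $|V_u(Z)\cap V(T)|\leq|E_0|$.

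Two remarks on this plan. First, it uses only that $T$ is a covering subtree, not that it is a maximum one; the ``maximum'' hypothesis in the statement is stronger than what the argument needs and is presumably kept because that is the form in which the proposition is later applied. Second, by Remark~\ref{rem:atypical.fence} a vertex may lie in both $V_u(Z)$ and $V_\ell(Z)$, but this is harmless: such a vertex is counted once in each of the two index sets above, and the displayed identity and both inequalities remain valid as written.
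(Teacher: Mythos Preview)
Your proof is correct and follows essentially the same double-counting of $E(Z)\cap E(T)$ as the paper: the paper argues by contradiction but relies on the same two facts, that each upper vertex of $Z$ lying in $T$ has an outgoing $Z$-edge in $T$, and that each lower vertex in $T$ receives at most one such edge. Your version is actually more careful than the paper's on the first point (the paper simply asserts $\mathit{outdeg}_G(v)\geq 1$ without explicitly invoking the maximality of $Z$ or the covering property of $T$), and your remark that only the covering-subtree hypothesis is needed, not maximality, is correct.
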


\begin{proof}
  On the contrary, suppose that $|V_u(Z)\cap V(T)| > |V_\ell(Z) \cap V(T)|$ holds. 
  Let $G$ be a subgraph $G :=(V(Z)\cap V(T) , E(Z)\cap E(T))$ of $Z$, which is a forest (i.e.  a collection of trees). By a slight abuse of notation, let  $V_\ell(G):=V_\ell(Z)\cap V(T)$ and $V_u(G):=V_u(Z)\cap V(T)$, respectively. 
Since we have $outdeg_G(v) \geq 1$ for any $v\in V_u(G)$,  $\sum_{v\in V_u(G)} outdeg_G(v) \geq \sum_{v\in V_u(G)} 1 = |V_u(G)|$ holds. By the assumption,  $|V_u(G)| > |V_\ell(G)|$.
 By the hand-shaking lemma, $\sum_{v\in V_u(G)} outdeg_G(v) = \sum_{v\in V_\ell(G)} indeg_G(v)$. Then we have $\sum_{v\in V_\ell(G)} indeg_G(v) >|V_\ell (G)|$, which  means that there exists a vertex $v \in V_\ell (G)$ such that $indeg_G(v) \geq 2$, which contradicts that $G$ is a forest. 
  This completes the proof.
\end{proof}

Proposition~\ref{number_of_U/L_vertices_in_MCST} yields the following useful result.




\begin{lemma}\label{upper_vertex_and_missing_vertex}
   Let $Z$ be a maximal W-fence in a rooted almost-binary phylogenetic $X$-network $N$ and let $T$ be a maximum covering subtree of $N$.  Then, there exists at least one upper vertex $v\in V_u(Z)$ that is not included in $T$. 
\end{lemma}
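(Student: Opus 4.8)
The statement to prove is Lemma~\ref{upper_vertex_and_missing_vertex}: for a maximal W-fence $Z$ in $N$ and a maximum covering subtree $T$ of $N$, at least one upper vertex of $Z$ is missing from $T$. My plan is to derive this directly from Proposition~\ref{four_types_of_zig-zag_trail} and Proposition~\ref{number_of_U/L_vertices_in_MCST}, together with a counting argument against the assumption that \emph{all} upper vertices of $Z$ survive in $T$.

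\medskip

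\noindent\textbf{Main steps.} First I would suppose, for contradiction, that every upper vertex of $Z$ lies in $V(T)$, i.e.\ $V_u(Z)\subseteq V(T)$, so that $|V_u(Z)\cap V(T)| = |V_u(Z)|$. Combining this with Proposition~\ref{number_of_U/L_vertices_in_MCST} gives
\[
 |V_u(Z)| = |V_u(Z)\cap V(T)| \leq |V_\ell(Z)\cap V(T)| \leq |V_\ell(Z)|.
\]
On the other hand, since $Z$ is a W-fence, Proposition~\ref{four_types_of_zig-zag_trail} gives $|V_\ell(Z)| - |V_u(Z)| = -1$, i.e.\ $|V_\ell(Z)| = |V_u(Z)| - 1 < |V_u(Z)|$. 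This contradicts the chain of inequalities above, so the supposition fails and at least one upper vertex of $Z$ must be outside $T$. That is essentially the whole argument; the bulk of the work has already been absorbed into the two cited propositions.

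\medskip

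\noindent\textbf{Where the care is needed.} The one subtlety I would be alert to is the fact, highlighted in the paragraph after Definition~\ref{upper,lower}, that a vertex of $Z$ can simultaneously be an upper vertex and a lower vertex (this happens exactly for the ``atypical'' fences of Remark~\ref{rem:atypical.fence}, such as $Z_3$ in Figure~\ref{fig:zig-zag-trail-decomposition}). I should therefore make sure that $|V_u(Z)|$, $|V_\ell(Z)|$ and their intersections with $V(T)$ are counted as sizes of the \emph{sets} $V_u(Z)$ and $V_\ell(Z)$ (as they are defined), not as counts of vertex occurrences along the edge sequence — but since Proposition~\ref{four_types_of_zig-zag_trail} and Proposition~\ref{number_of_U/L_vertices_in_MCST} are already phrased in terms of these sets, the counting argument goes through verbatim, and this is really just a bookkeeping check rather than a genuine obstacle. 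No case analysis on the shape of the W-fence is required, because both input propositions already handle arbitrary maximal zig-zag trails.
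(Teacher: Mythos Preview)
Your proof is correct and is essentially the same as the paper's: both combine $|V_\ell(Z)|=|V_u(Z)|-1$ from Proposition~\ref{four_types_of_zig-zag_trail} with $|V_u(Z)\cap V(T)|\le |V_\ell(Z)\cap V(T)|\le |V_\ell(Z)|$ from Proposition~\ref{number_of_U/L_vertices_in_MCST} to conclude $|V_u(Z)\cap V(T)|\le |V_u(Z)|-1$. The only cosmetic difference is that you phrase it as a contradiction while the paper states the chain of inequalities directly.
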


\begin{proof}
 Since $Z$ is a W-fence, the equation~\ref{eq:diff} in Proposition~\ref{four_types_of_zig-zag_trail} gives $|V_\ell(Z)| = |V_u(Z)| -1$. Proposition~\ref{number_of_U/L_vertices_in_MCST} gives  $|V_u(Z)\cap V(T)| \leq |V_\ell(Z) \cap V(T) |$. Clearly, $|V_\ell(Z)\cap V(T)| \leq |V_\ell(Z)|$. Thus, $|V_u(Z) \cap V(T)| \leq |V_u(Z)| - 1$, which means that $T$ does not include at least one upper vertex of $Z$. This completes the proof. 
\end{proof}

\begin{theorem}\label{infimum_of_eta}
  For any rooted almost-binary phylogenetic $X$-network $N$, $\eta^\ast(N) \geq |\mathcal{W}_N|$ holds.
\end{theorem}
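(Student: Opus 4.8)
The plan is to leverage Lemma~\ref{upper_vertex_and_missing_vertex} together with the disjointness of the maximal zig-zag trails guaranteed by Theorem~\ref{uniquely.decomposable}. First I would fix a maximum covering subtree $T$ of $N$ and observe that $\eta^\ast(N) = |V(N)| - |V(T)|$ is exactly the number of vertices of $N$ that are \emph{not} in $T$. By Theorem~\ref{uniquely.decomposable}, the maximal zig-zag trail decomposition $\mathcal{Z}$ partitions the \emph{edge} set of $N$, but the key point I need is that distinct maximal W-fences are vertex-disjoint — or at least that each contributes a \emph{distinct} missing vertex. Indeed, since the decomposition partitions $E(N)$ and every vertex of $N$ has degree at least one, each W-fence $Z \in \mathcal{W}_N$ has at least one upper vertex; by Lemma~\ref{upper_vertex_and_missing_vertex} at least one upper vertex of $Z$ is absent from $T$.

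The main step is then to argue that these missing upper vertices, one per W-fence, can be chosen to be pairwise distinct, so that $|V(N) \setminus V(T)| \geq |\mathcal{W}_N|$, which is precisely $\eta^\ast(N) \geq |\mathcal{W}_N|$. The cleanest way I would do this is to show that two distinct maximal zig-zag trails cannot share an upper vertex of a W-fence. An upper vertex $v$ of a W-fence $Z$ satisfies $outdeg_Z(v) \geq 2$ in fact — more carefully, by Definition~\ref{def:zig-zags.4types} the W-fence reads $v_1 > v_2 < v_3 > \cdots$, so every upper vertex is a tree vertex with both of its outgoing edges lying in $Z$; hence such a vertex has out-degree exactly $2$ in $N$ and both outgoing edges belong to the single trail $Z$. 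Consequently no other trail $Z'$ in $\mathcal{Z}$ can contain $v$ as an upper vertex (it has no spare outgoing edge), and in a W-fence the missing vertex provided by the lemma is an \emph{upper} vertex. Therefore the map sending each $Z \in \mathcal{W}_N$ to a missing upper vertex of $Z$ is injective, giving $|\mathcal{W}_N|$ distinct vertices of $N$ absent from any maximum covering subtree.

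Summing up: $\eta^\ast(N) = |V(N)| - |V(T)| \geq |\mathcal{W}_N|$, as required. I expect the subtle point — the one deserving the most care in the write-up — to be establishing the injectivity claim, i.e.\ ruling out that two W-fences' designated missing upper vertices coincide. This hinges on the precise degree bookkeeping for upper vertices of W-fences (each such vertex uses up both outgoing edges inside its own trail) and on the fact that $\mathcal{Z}$ is an edge-partition, so I would state that sub-argument as a short separate observation before assembling the final inequality.
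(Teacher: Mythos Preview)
Your approach is exactly the paper's: the paper's entire proof is the single sentence ``Applying Lemma~\ref{upper_vertex_and_missing_vertex} to each maximal W-fence $Z$ in $N$, we obtain $\eta^\ast(N) \geq |\mathcal{W}_N|$.'' You have in fact gone further than the paper by spelling out the injectivity step that the paper leaves implicit.

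One small correction to your injectivity argument, since you flagged it as the part deserving care. Your claim that ``every upper vertex [of a W-fence] is a tree vertex with both of its outgoing edges lying in $Z$'' is not quite right for the \emph{terminal} upper vertices $v_1$ and $v_{2k+1}$: each of these contributes only one outgoing edge to $Z$, and by maximality of $Z$ it has out-degree exactly~$1$ in $N$ (so it is not a tree vertex). The cleaner and fully general way to phrase the injectivity is: if $v$ were an upper vertex of two distinct maximal trails $Z$ and $Z'$, then $v$ would have an outgoing edge $e\in E(Z)$ and an outgoing edge $e'\in E(Z')$ with $tail(e)=tail(e')=v$; but two edges sharing a tail lie in the same maximal zig-zag trail by Theorem~\ref{uniquely.decomposable}, a contradiction. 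This covers interior and terminal upper vertices uniformly and is all you need.
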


\begin{proof}
 Applying Lemma~\ref{upper_vertex_and_missing_vertex} to each maximal W-fence $Z$ in $N$, we obtain $\eta^\ast(N) \geq |\mathcal{W}_N|$. 
\end{proof}

\subsection{Necessary and Sufficient Condition for when $\eta^\ast(N)=|\mathcal{W}_N|$ holds}\label{subsec:eq}
In this section, we will give a characterisation of rooted almost-binary phylogenetic $X$-networks $N$ with $\eta^\ast(N)=|\mathcal{W}_N|$ and thus show that the inequality in Theorem~\ref{infimum_of_eta} gives a tight lower bound for $\eta^\ast (N)$. We now give the necessary definitions.

 \begin{definition}\label{def:above-below}
  Let $\mathcal Z = \{Z_1, \dots, Z_k\}$ be the maximal zig-zag trail decomposition of a rooted almost-binary phylogenetic $X$-network $N$.  
  For any distinct $Z_i, Z_j\in \mathcal Z$, we say that $Z_i$ is \emph{above} $Z_j$ or $Z_j$ is \emph{below} $Z_i$ if there exists a vertex  $v\in V_\ell (Z_i) \cap V_u(Z_j)$.  
An \emph{M-W pair} of $N$ is defined to be an ordered pair $(Z_i, Z_j)$ such that $Z_i\in \mathcal{M}_N$, $Z_j\in \mathcal{W}_N$, and $Z_i$ is above $Z_j$. 
 \end{definition}
 
%
%

With the `above-below' relation between maximal zig-zag trails in $N$ introduced in Definition~\ref{def:above-below}, we can formally describe whether and how any pair of maximal zig-zag trails share a vertex. As defined in Definition~\ref{def:above-below} and as Proposition~\ref{the_spread_of_missing_vertices} states, particularly important for MCSP is an M-W pair. 

\begin{proposition}\label{the_spread_of_missing_vertices}
Let $T$ be a maximum covering subtree of a rooted almost-binary $X$-network $N$. Suppose $Z$ is a maximal W-fence in $N$ that is not below any maximal M-fence in $N$. Then there exists a maximal zig-zag trail $Z^\prime$ (not an M-fence) above $Z$ such that there exists an upper vertex $v\in V_u(Z^\prime)$ with $v\not\in V(T)$.
\end{proposition}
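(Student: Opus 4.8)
The plan is to start from the W-fence $Z$ and walk ``upwards'' into a neighbouring maximal zig-zag trail of the maximal zig-zag trail decomposition $\mathcal{Z}$ of $N$. By Lemma~\ref{upper_vertex_and_missing_vertex}, $Z$ has an upper vertex outside $T$; I would pick a suitable such vertex $v$, pass to an incoming edge $e=(u,v)$ of $N$, let $Z'$ be the (unique, by Theorem~\ref{uniquely.decomposable}) element of $\mathcal{Z}$ containing $e$, and then show that $Z'$ is above $Z$, that $Z'$ is not an M-fence, and that $Z'$ itself has an upper vertex outside $T$. The edge $e$ exists because $v\notin V(T)$ forces $v\neq\rho$ (the root lies in every covering subtree of $N$), so $indeg_N(v)\geq 1$. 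A short inspection of Definitions~\ref{def:zig-zags.4types} and~\ref{upper,lower} shows that the head of every edge of a maximal zig-zag trail is one of its lower vertices, so $v\in V_\ell(Z')$; together with $v\in V_u(Z)$ this shows that $Z'$ is above $Z$ as long as $Z'\neq Z$, after which the hypothesis that $Z$ is below no maximal M-fence immediately rules out $Z'\in\mathcal{M}_N$.

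The step I expect to be the main obstacle is guaranteeing $Z'\neq Z$, equivalently $e\notin E(Z)$, equivalently $indeg_Z(v)=0$. Since $Z$ is a W-fence, every edge of $Z$ runs from an upper vertex to a lower vertex, so $indeg_Z(v)=0$ precisely when $v$ is \emph{not} a lower vertex of $Z$; hence I cannot take $v$ to be an arbitrary missing upper vertex, but must first establish that $Z$ has a missing upper vertex that is not simultaneously a lower vertex of $Z$. For this I would combine $|V_u(Z)|=|V_\ell(Z)|+1$ from Proposition~\ref{four_types_of_zig-zag_trail} with $|V_u(Z)\cap V(T)|\leq |V_\ell(Z)\cap V(T)|$ from Proposition~\ref{number_of_U/L_vertices_in_MCST} to get
\[
  |V_u(Z)\setminus V(T)|\ \geq\ |V_\ell(Z)\setminus V(T)|+1 .
\]
Splitting the missing upper vertices of $Z$ according to whether they are also lower vertices of $Z$, and the missing lower vertices according to whether they are also upper vertices, the common ``dual-role'' part cancels from both sides, so this inequality forces the existence of a missing upper vertex of $Z$ that is not a lower vertex of $Z$. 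Choosing $v$ to be such a vertex gives $indeg_Z(v)=0$, hence $Z'\neq Z$.

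It then remains to exhibit an upper vertex of $Z'$ outside $T$. Since $Z'$ is above $Z$ but not an M-fence, it is a crown, an N-fence, or a W-fence, so $|V_u(Z')|\geq|V_\ell(Z')|$ by Proposition~\ref{four_types_of_zig-zag_trail}. If $Z'$ is a W-fence, Propositions~\ref{four_types_of_zig-zag_trail} and~\ref{number_of_U/L_vertices_in_MCST} already give $|V_u(Z')\cap V(T)|\leq|V_\ell(Z')\cap V(T)|\leq|V_\ell(Z')|=|V_u(Z')|-1$. If $Z'$ is a crown or an N-fence, then $|V_u(Z')|=|V_\ell(Z')|$, and since $v\in V_\ell(Z')\setminus V(T)$ we get $|V_u(Z')\cap V(T)|\leq|V_\ell(Z')\cap V(T)|\leq|V_\ell(Z')|-1=|V_u(Z')|-1$. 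Either way $|V_u(Z')\cap V(T)|<|V_u(Z')|$, so $Z'$ has an upper vertex outside $T$, which is precisely the assertion. Apart from the counting in the second paragraph that pins down the correct choice of $v$, the argument is routine bookkeeping with Propositions~\ref{four_types_of_zig-zag_trail} and~\ref{number_of_U/L_vertices_in_MCST} together with the observation that tails (resp.\ heads) of edges of a maximal zig-zag trail are its upper (resp.\ lower) vertices.
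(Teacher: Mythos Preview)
Your argument is correct and follows essentially the same route as the paper's proof: pick a missing upper vertex of $Z$, pass to the maximal zig-zag trail $\tilde Z$ in which that vertex is a lower vertex, observe that $\tilde Z$ is not an M-fence, and combine Propositions~\ref{four_types_of_zig-zag_trail} and~\ref{number_of_U/L_vertices_in_MCST} to force a missing upper vertex of $\tilde Z$. The only difference is that you explicitly secure $Z'\neq Z$ by first choosing a missing upper vertex that is \emph{not} also a lower vertex of $Z$; the paper takes an arbitrary missing upper vertex and does not spell out this distinctness. Your counting argument for this refinement (using $|V_u(Z)\setminus V(T)|\geq |V_\ell(Z)\setminus V(T)|+1$ and cancelling the dual-role vertices $V_u(Z)\cap V_\ell(Z)\setminus V(T)$ from both sides) is sound and makes the proof airtight.
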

  
  \begin{proof}
We  prove that there exists a maximal zig-zag trail $Z^\prime$ above $Z$ such that $Z^\prime$ is not an M-fence and $|V_{u}(Z^\prime)\cap V(T)| \leq |V_{u}(Z^\prime)|-1$ holds. 
  By Lemma~\ref{upper_vertex_and_missing_vertex}, there exists an upper vertex $u$ of $Z$ such that $u  \not\in V(T)$. Let $\tilde{Z}$ be a maximal zig-zag trail in $N$ with $u \in V_\ell(\tilde{Z})$. By Definition~\ref{upper,lower}, $\tilde{Z}$ is above $Z$. Then, by $u  \not\in V(T)$, $|V_\ell(\tilde{Z})\cap V(T)| \leq |V_\ell(\tilde{Z})|-1$ holds. As $\tilde{Z}$ is not an M-fence, by equation~(\ref{eq:diff}) in Proposition~\ref{four_types_of_zig-zag_trail}, we have $|V_\ell(\tilde{Z})| \leq |V_{u}(\tilde{Z})|$. Therefore, we can conclude that $|V_\ell(\tilde{Z})\cap V(T)|\leq |V_{u}(\tilde{Z})|-1$. As Proposition~\ref{number_of_U/L_vertices_in_MCST} implies $|V_u(\tilde{Z})\cap V(T)|\leq |V_\ell(\tilde{Z})\cap V(T)|$, we obtain $|V_{u}(\tilde{Z})\cap V(T)|\leq  |V_{u}(\tilde{Z})|-1$. Thus, $\tilde{Z}$ has the desired property. This completes the proof.
\end{proof}


\begin{definition}\label{MWmatching}
Let $N$ be a rooted almost-binary phylogenetic $X$-network that has at least one maximal W-fence, and 
let $\mathcal{M}_N$ and $\mathcal{W}_N$ be the sets of maximal M-fences and maximal W-fences in $N$, respectively. 
An \emph{M-W matching} in $N$ is defined to be a non-empty set $S=\{(M_1, W_1), \dots, (M_p, W_p)\}$  of M-W pairs in $N$ such that $M_i \neq M_j$ and $W_i \neq W_j$ hold for any $i\neq j \in [1, p]$.
An M-W matching is said to be \emph{$\mathcal{W}_N$-saturated} if $|S|=|\mathcal{W}_N|$.
\end{definition}


See the left of Figure~\ref{fig:W-saturated} for an illustration of an M-W pair $(Z_1, Z_2)$.  See also the right of Figure~\ref{fig:W-saturated} for an illustration of a $\mathcal{W}_N$-saturated matching $\{(Z_1, Z_3), (Z_2, Z_4)\}$. We note that there exists no $\mathcal{M}_N$-saturated M-W matching for any $N$, as in Remark~\ref{rem:MW}.

  \begin{figure}[hbt]
    \centering
    \includegraphics[scale=0.5]{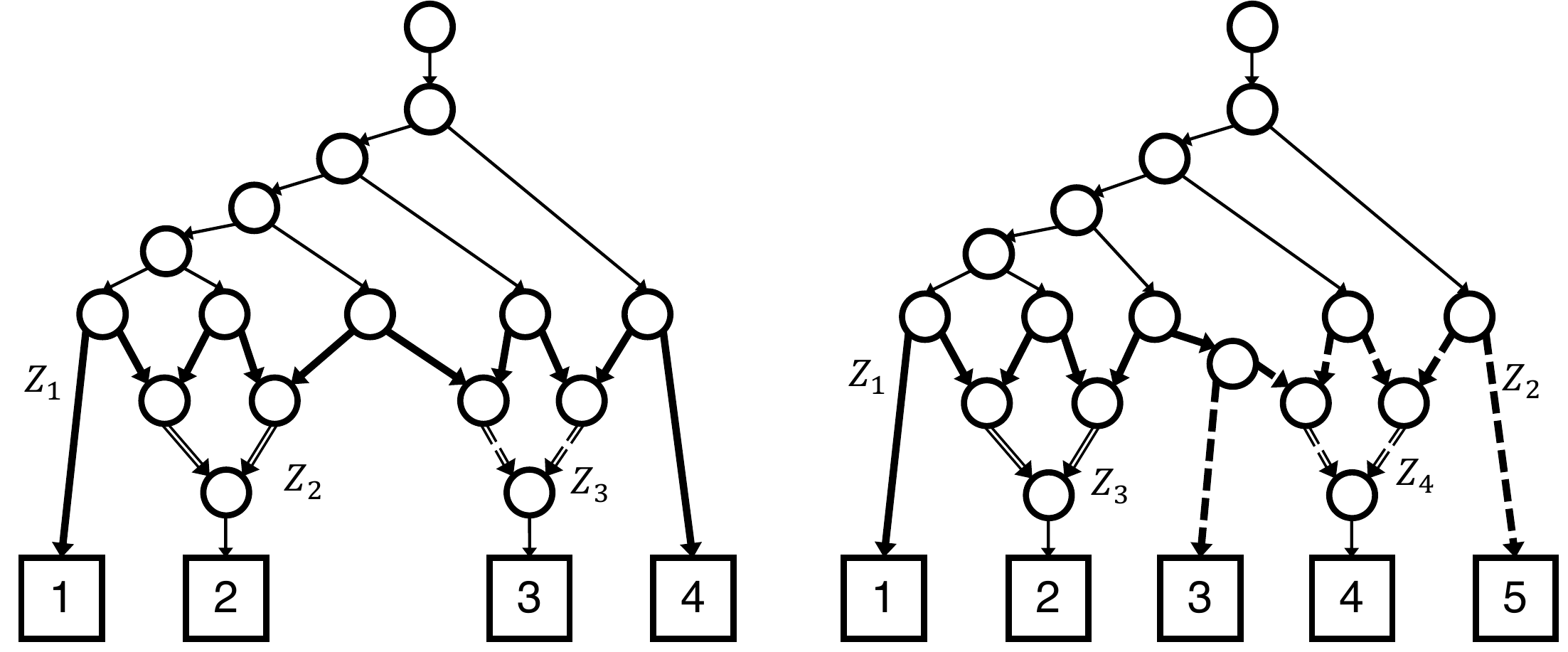}
    \caption{
Left: a rooted almost-binary phylogenetic $X$-network $N$ where there exists no $\mathcal{W}_N$-saturated M-W matching. Right: a rooted almost-binary phylogenetic $X$-network $N$ that has a $\mathcal{W}_N$-saturated M-W matching.
  \label{fig:W-saturated}
 }
  \end{figure}

\begin{figure}[hbt]
  \centering
  \includegraphics[scale=0.5]{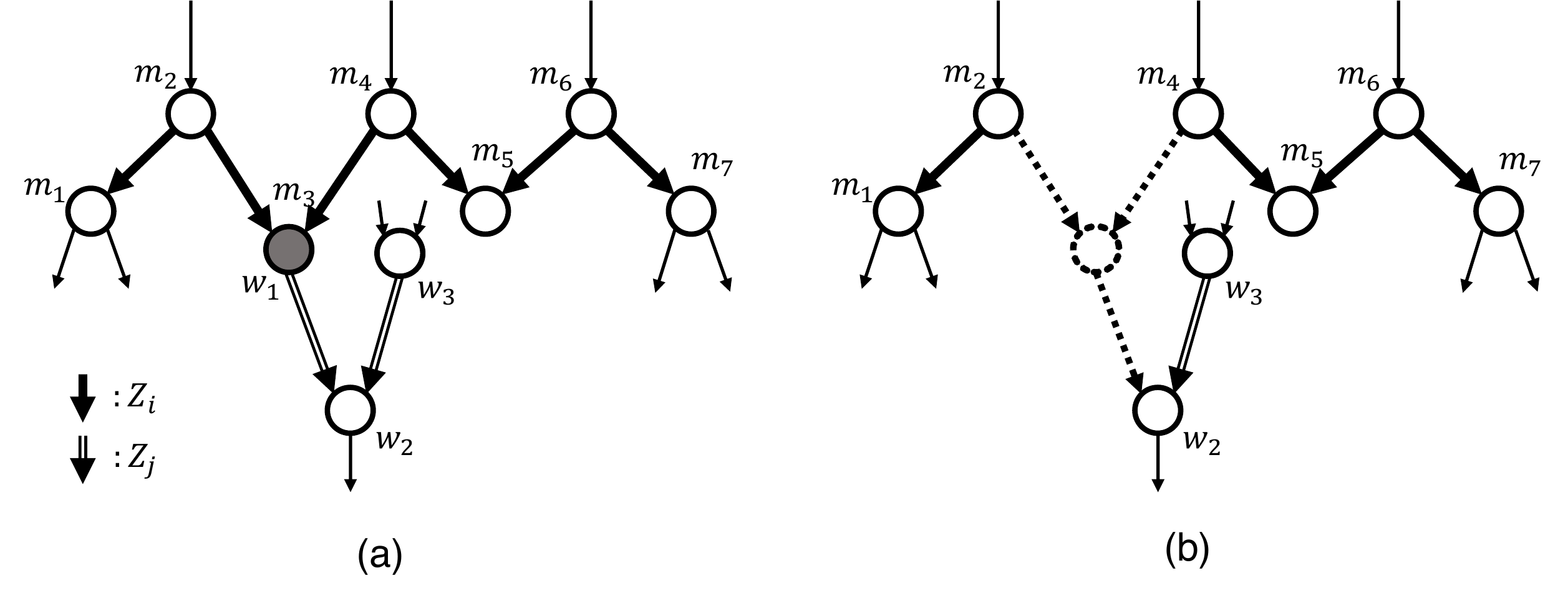}
  \caption{
(a): An M-W pair $(Z_i, Z_j)$ in a rooted almost-binary phylogenetic $X$-network, where the vertices of the maximal M-fence $Z_i$ and those of the maximal W-fence $Z_j$ are indicated using $m_i$ and $w_i$, respectively.  Resolving $(Z_i, Z_j)$ at vertex $m_3 = w_1$ results in the graph depicted in (b).
  \label{fig:MWpair}
  }
\end{figure}

\begin{remark}~\label{rem:MW}
The number $|\mathcal{M}_N|$ of M-fences is greater than or equal to the number $|\mathcal{W}_N|$ of W-fences. 
\end{remark}
\begin{proof}
The proof will be completed if we can show that $|\mathcal{M}_N| - |\mathcal{W}_N| = |X| - 1$ for any $N$ with $|X|\geq 2$. For any non-leaf vertex $v$ of $N$, there exists a maximal zig-zag trail $Z$ of $N$ such that $v$ is an upper vertex of $Z$.  
Therefore, the number of vertices of $N$ that are contained in $V_u (Z)$ of some maximal zig-zag trail $Z$ of $N$ equals $|V(N)|-|X|$. 
On the other hand, for any non-root vertex $v$ of $N$, there exists a maximal zig-zag trail $Z$ of $N$ such that $v$ is a lower vertex of $Z$, and thus the number  of vertices of $N$ that are contained in $V_\ell (Z)$ of some maximal zig-zag trail $Z$ of $N$ equals $|V(N)|-1$.
Then, we have 
$\sum_{Z\in \mathcal{Z}} (|V_{\ell}(Z)| -|V_u(Z)|) = |X|-1$. By the equation~(\ref{eq:diff}) in  Proposition~\ref{four_types_of_zig-zag_trail}, $|X|-1=1\times |\mathcal{M}_N| + 0 \times (|\mathcal{Z}|-|\mathcal{M}_N|-|\mathcal{W}_N|)  +  (-1)\times |\mathcal{W}_N|$. 
Thus, $|\mathcal{M}_N|-|\mathcal{W}_N| = |X|-1$, which completes the proof.
\end{proof}




To consider MCSP, we define the operation of eliminating an M-W pair in $N$ as follows. The resulting network $N^\prime$ is still a rooted almost-binary phylogenetic $X$-network, as ensured by Lemma~\ref{change}.

\begin{definition}\label{def:resolution}
  For an M-W pair $(Z_i, Z_j)$ in a rooted almost-binary phylogenetic $X$-network $N$ and any vertex $v\in V_\ell(Z_i)\cap V_u(Z_j)$, we say \emph{resolving $(Z_i, Z_j)$ at $v$} to mean the operation of creating the subgraph $N -\{v\}$ of $N$ (as illustrated in Figure~\ref{fig:MWpair}b).
\end{definition}




\begin{lemma}\label{change}
  Let $N$ be a rooted almost-binary phylogenetic $X$-network that contains an M-W pair $(Z_i, Z_j)$. Let $N^\prime$ be the graph obtained by resolving $(Z_i, Z_j)$ at any vertex $v$ in $V_\ell(Z_i)\cap V_u(Z_j)$. Then, $N^\prime$ is also a rooted almost-binary phylogenetic $X$-network. Moreover, $N^\prime$ satisfies $|\mathcal{W}_{N^\prime}| = |\mathcal{W}_{N}| -1$ and $|\mathcal{M}_{N^\prime}|=|\mathcal{M}_{N}|-1$.
\end{lemma}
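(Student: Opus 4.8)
The plan is to verify the two assertions separately: first that $N'$ is again a rooted almost-binary phylogenetic $X$-network, and then that resolving the pair destroys exactly one maximal W-fence and exactly one maximal M-fence while leaving the rest of the decomposition essentially intact. For the first part, let $v\in V_\ell(Z_i)\cap V_u(Z_j)$, so $v$ is a lower vertex of the M-fence $Z_i$ and an upper vertex of the W-fence $Z_j$. Since $v$ is an internal vertex (it lies strictly inside two fences, so it is neither the root nor a leaf) with $indeg_N(v)\le 2$ and $outdeg_N(v)\le 2$, I would first argue that in fact $(indeg_N(v),outdeg_N(v))=(2,1)$ or $(1,2)$ or $(2,2)$, and analyse each possibility for what $N-\{v\}$ looks like near the (at most four) neighbours of $v$. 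Removing $v$ deletes its incident edges; one checks that no new vertex of in-degree $0$ is created other than $\rho$, that the unique root and the leaf-set are unchanged, and that $N-\{v\}$ remains weakly connected, simple and acyclic — acyclicity and simplicity are immediate since we only delete, and connectivity follows because the edges of $Z_i$ and $Z_j$ other than those at $v$ still join everything through the rest of $N$. This is routine casework and I would present it compactly, possibly deferring the exhaustive degree analysis to a figure reference.

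For the second part, the key observation is how the maximal zig-zag trail decomposition of $N$ changes under deletion of $v$. Write $Z_i$ as $u_1<u_2>u_3<\cdots>u_{2k+1}$ (an M-fence, with $v$ one of its lower vertices $u_{2\ell}$) and $Z_j$ as $w_1>w_2<w_3>\cdots<w_{2m+1}$ (a W-fence, with $v$ one of its upper vertices, say $w_{2r-1}$). Deleting $v$ cuts $Z_i$ into two pieces at $u_{2\ell}$ and cuts $Z_j$ into two pieces at $w_{2r-1}$; the crucial point is that the edge(s) of $Z_i$ at $v$ point into $v$ and the edge(s) of $Z_j$ at $v$ point out of $v$, so after deleting $v$ the surviving left-piece of $Z_i$ and left-piece of $Z_j$ merge along the common (former) neighbour structure into a single zig-zag trail, and likewise on the right — the two M-fence halves each splice onto a W-fence half, turning the "M sitting on top of W" configuration into two trails that are no longer W-fences (they are either N-fences, or shorter M-fences, or a single edge, depending on the parities). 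I would make this precise by tracking the up/down labelling of the glued endpoints using Definition~\ref{upper,lower}, showing the spliced trail has no W-fence parity. Every maximal zig-zag trail of $N$ not equal to $Z_i$ or $Z_j$ is untouched, so by the uniqueness in Theorem~\ref{uniquely.decomposable} the new decomposition has exactly one fewer W-fence and exactly one fewer M-fence; alternatively, one can obtain $|\mathcal M_{N'}|-|\mathcal W_{N'}|$ from Remark~\ref{rem:MW} applied to $N'$ (same $X$) together with $|\mathcal W_{N'}|=|\mathcal W_N|-1$.

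The main obstacle I anticipate is the bookkeeping in the splicing argument: one must handle the several cases for $(indeg_N(v),outdeg_N(v))$ — in particular the $(2,2)$ case, where $v$ is simultaneously a lower and an upper vertex and may belong to the fences in more than one way — and check in each case that neither of the two resulting trails is a W-fence and that no other trail is affected. A clean way to avoid exhaustive casework is to argue abstractly about parities: any maximal zig-zag trail through $v$ enters $v$ on "down-steps" from $Z_i$-side and leaves on "down-steps" to the $Z_j$-side, so $v$'s removal replaces a $\{$M-fence, W-fence$\}$ pair with a set of trails whose signed $(|V_\ell|-|V_u|)$ totals (by Proposition~\ref{four_types_of_zig-zag_trail}) must still sum correctly, and local inspection rules out a W-fence appearing. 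I would lean on this parity/counting viewpoint to keep the proof short, with a figure (Figure~\ref{fig:MWpair}) carrying the visual intuition.
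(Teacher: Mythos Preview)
Your central structural claim---that after removing $v$ the halves of $Z_i$ and $Z_j$ ``merge along the common (former) neighbour structure into a single zig-zag trail''---is wrong, and this is where the argument breaks. The edges of $Z_i$ incident to $v$ point into $v$ (their tails are parents of $v$), while the edges of $Z_j$ incident to $v$ point out of $v$ (their heads are children of $v$); once you delete $v$ together with all these edges, the surviving fragments of $Z_i$ terminate at parents of $v$ and the surviving fragments of $Z_j$ terminate at children of $v$, which are distinct vertices in an acyclic graph. More decisively, the new terminal edge of a $Z_i$-fragment has its \emph{tail} at the cut vertex $m_{2s}$, and the only other edge of $N$ with tail $m_{2s}$ was the deleted edge $(m_{2s},v)$, so the fragment cannot be extended there in any zig-zag trail of $N'$; the symmetric statement holds at the cut ends of the $Z_j$-fragments. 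Nothing splices: the fragments of $Z_i$ and of $Z_j$ are \emph{separate} maximal zig-zag trails in $N'$.

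The correct observation, which is what the paper establishes by case analysis on $(indeg_N(v),outdeg_N(v))$, is that each such fragment is an N-fence by a parity count: removing one terminal edge from an even-length fence leaves one odd-length trail, and removing two consecutive interior edges leaves two odd-length trails. Since every other maximal trail of $N$ is edge-disjoint from $Z_i\cup Z_j$ and therefore unchanged, one reads off $|\mathcal{W}_{N'}|=|\mathcal{W}_N|-1$ and $|\mathcal{M}_{N'}|=|\mathcal{M}_N|-1$ directly. Two smaller issues: you omit the degree case $(indeg_N(v),outdeg_N(v))=(1,1)$, which is allowed for almost-binary networks and is handled as a separate (easy) case in the paper; and in your labelling $u_1<u_2>u_3<\cdots$ of the M-fence the lower vertices are the odd-indexed ones, so writing $v=u_{2\ell}$ is a slip. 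Your closing idea of invoking Remark~\ref{rem:MW} to get the M-count from the W-count is a valid shortcut, but it still requires the W-count, which in turn requires the correct picture of the decomposition of $N'$.
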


\begin{proof}
Suppose $(Z_i, Z_j)$ is an M-W pair with $Z_i :  m_1 < m_2 > m_3 < \cdots > m_{2p-1} < m_{2p} > m_{2p+1}$ and $Z_j : w_1 > w_2 < w_3 > \cdots < w_{2q-1} > w_{2q} < w_{2q+1}$ ($p, q \in \mathbb{N}$). It is not hard to see that $N^\prime$ is still an almost-binary phylogenetic $X$-network. 


 Then, Theorem~\ref{uniquely.decomposable} ensures that there exists a unique maximal zig-zag trail decomposition, $\mathcal Z$ and $\mathcal Z^\prime$ ,  of $N$ and $N^\prime$, respectively. 
We now examine the following cases for $v\in V_\ell(Z_i)\cap V_u(Z_j)$ and will prove that both $|\mathcal{W}_{N^\prime}| = |\mathcal{W}_{N}| -1$ and $|\mathcal{M}_{N^\prime}|=|\mathcal{M}_{N}|-1$ holds in each case.  


Case 1: when $v$ is a vertex with $indeg_N(v)=1$ and $outdeg_N(v)=2$ (see also Figure~\ref{fig:case1}).  In this case, the edge incoming to $v$ is either $(m_2, m_1)$ or $(m_{2p}, m_{2p+1})$; without loss of generality, we may assume   $v=m_1$. We can also see that $v=w_k$ ($k\neq 1, 2q+1$). Then, the resolution of $(Z_i, Z_j)$ at $v$ deletes the three edges of $N$, that is, $(m_2, m_1), (w_{2k+1}, w_{2k}), (w_{2k+1}, w_{2k+2})$.
Let $E^\prime:=E(Z_i)\setminus \{(m_2, m_1)\}$ and $E^{\prime\prime}:=E(Z_j)\setminus \{(w_{2k+1}, w_{2k}), (w_{2k+1}, w_{2k+2})\}$, and let $G[E^\prime]$ and $G[E^{\prime\prime}]$ be the subgraphs of $N^\prime$ induced by $E^\prime$ and by $E^{\prime\prime}$, respectively. Since $(m_2, m_1)$ is a terminal edge of $Z_i$, we easily see that $G[E^\prime]$ is a maximal zig-zag trail in $N^\prime$. Moreover,  $G[E^\prime]$ is an N-fence since $G[E^\prime]$ has an odd number of edges. 
Regarding  the consecutive edges $(w_{2k+1}, w_{2k})$ and $(w_{2k+1}, w_{2k+2})$ of $Z_j$,  $G[E^{\prime\prime}]$ consists of two maximal N-fences of $N^\prime$ since neither of the two edges is a terminal edge of $Z_j$. 
Thus, resolving $(Z_i, Z_j)$ at $v$ turns the maximal M-fence $Z_i$ of $N$ into a maximal N-fence of $N^\prime$ and the maximal W-fence $Z_j$ of $N$ into two maximal N-fences of $N^\prime$.  Hence,  $|\mathcal{W}_{N^\prime}| = |\mathcal{W}_{N}| -1$ and $|\mathcal{M}_{N^\prime}|=|\mathcal{M}_{N}|-1$.

Case 2: when $v$ is a vertex with $indeg_N(v)=2$ and $outdeg_N(v)=1$ (see also Figure~\ref{fig:case2}).  In this case, the edge outgoing from $v$ is either $(w_1, w_2)$ or $(w_{2q+1}, w_{2q})$; without loss of generality, we may assume   $v=w_1$. Then, by an argument similar to Case 1, we obtain $|\mathcal{W}_{N^\prime}| = |\mathcal{W}_{N}| -1$ and $|\mathcal{M}_{N^\prime}|=|\mathcal{M}_{N}|-1$.


%

Case 3: when $v$ is a vertex with $indeg(v) = outdeg(v) = 1$ (see also Figure~\ref{fig:case3}). 
In this case, the edge incoming to $v$ (resp. the edge outgoing from $v$) is a terminal edge of $Z_i$ (resp. $Z_j$). Similarly to the above, we can see that resolving $(Z_i, Z_j)$ turns the maximal M-fence $Z_i$ (resp. $Z_j$) into a maximal N-fence of $N^\prime$. Thus, we obtain $|\mathcal{W}_{N^\prime}| = |\mathcal{W}_{N}| -1$ and $|\mathcal{M}_{N^\prime}|=|\mathcal{M}_{N}|-1$.

%

Case 4: when $v$ is a vertex with $indeg(v) = outdeg(v) = 2$ (see also Figure~\ref{fig:case4}). 
In this case, the two edges incoming to $v$ (resp. the two edge outgoing from $v$) are non-terminal edges of $Z_i$ (resp. $Z_j$). Similarly to the above, we can see that resolving $(Z_i, Z_j)$ breaks the maximal M-fence $Z_i$ (resp. $Z_j$) into two  maximal N-fences of $N^\prime$. Thus, we obtain $|\mathcal{W}_{N^\prime}| = |\mathcal{W}_{N}| -1$ and $|\mathcal{M}_{N^\prime}|=|\mathcal{M}_{N}|-1$. This completes the proof.

%
%
%
%
%

%
\end{proof}

\begin{figure}[hbt]
  \centering
  \includegraphics[scale=0.5]{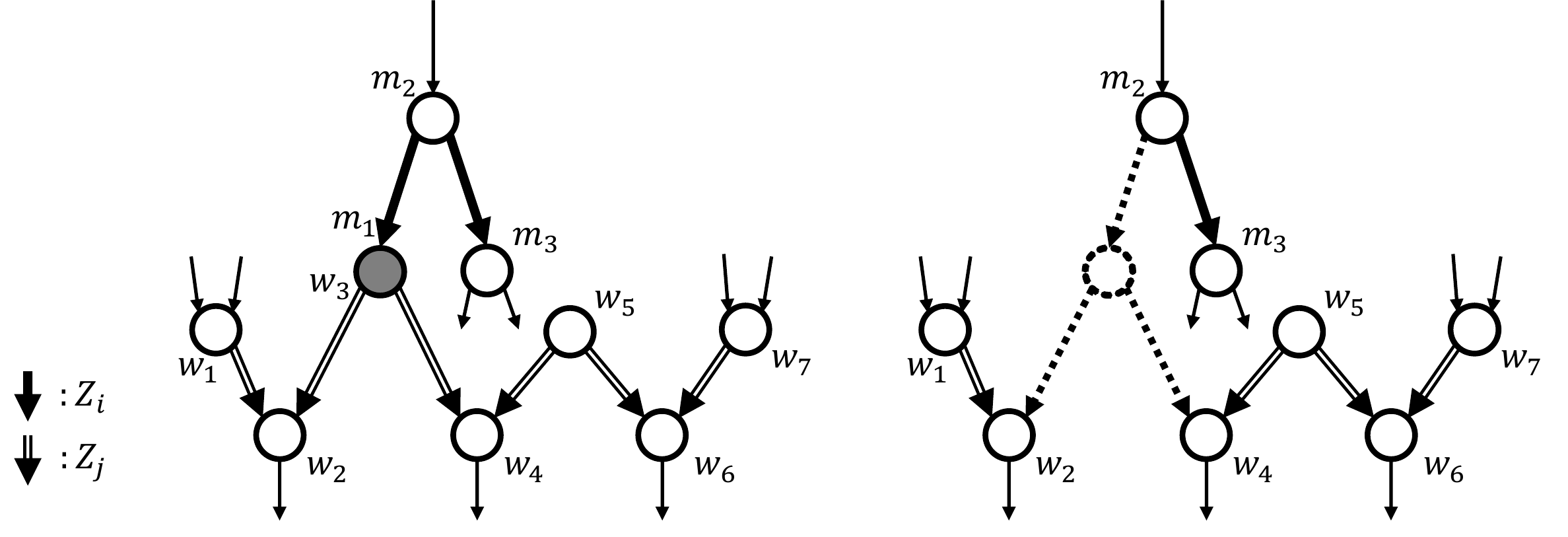}
  \caption{Case 1 in the proof of Lemma~\ref{change} ($p=1, q= 3$). 
  \label{fig:case1}
  }
\end{figure}

\begin{figure}[hbt]
  \centering
  \includegraphics[scale=0.5]{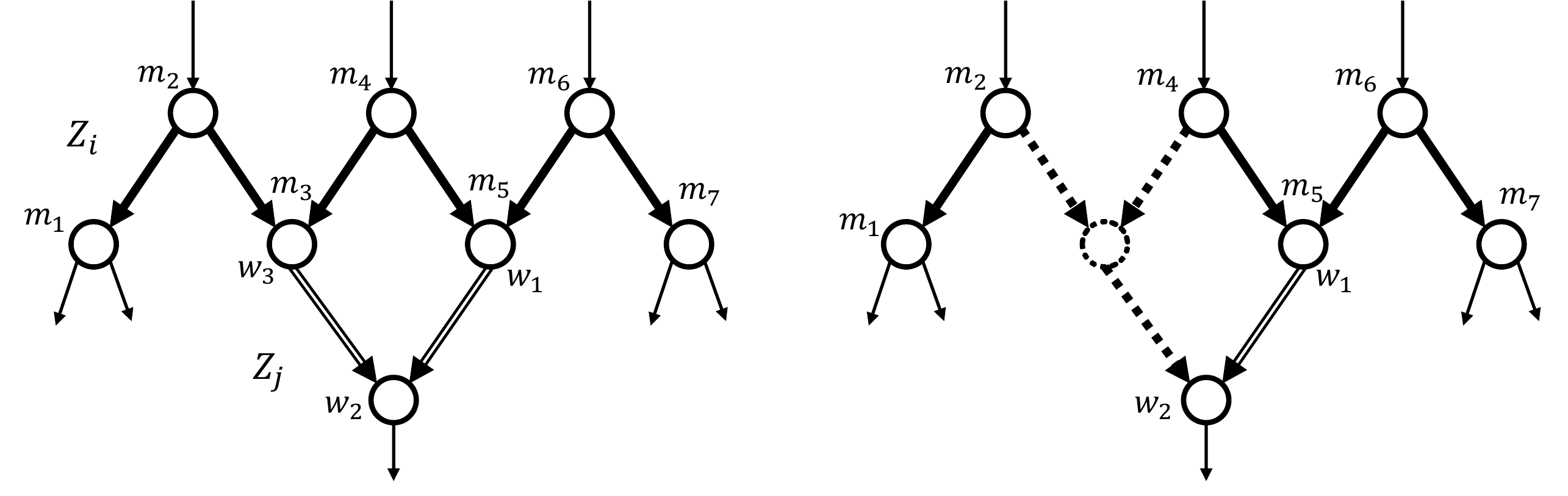}
  \caption{Case 2 in the proof of Lemma~\ref{change}.
\label{fig:case2}
 }
  \end{figure}

\begin{figure}[hbt]
  \centering
  \includegraphics[scale=0.5]{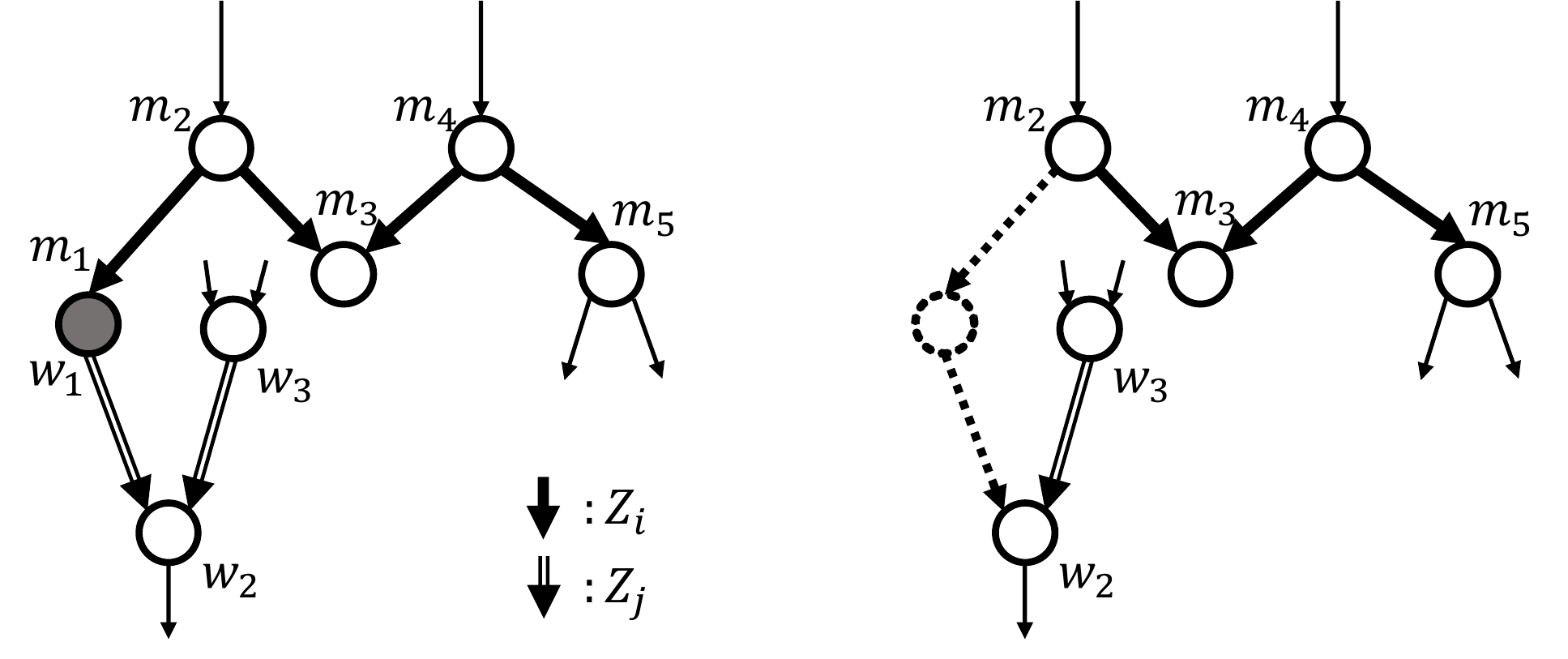}
  \caption{Case 3 in the proof of Lemma~\ref{change}.
\label{fig:case3}
  }
\end{figure}

\begin{figure}[hbt]
  \centering
  \includegraphics[scale=0.5]{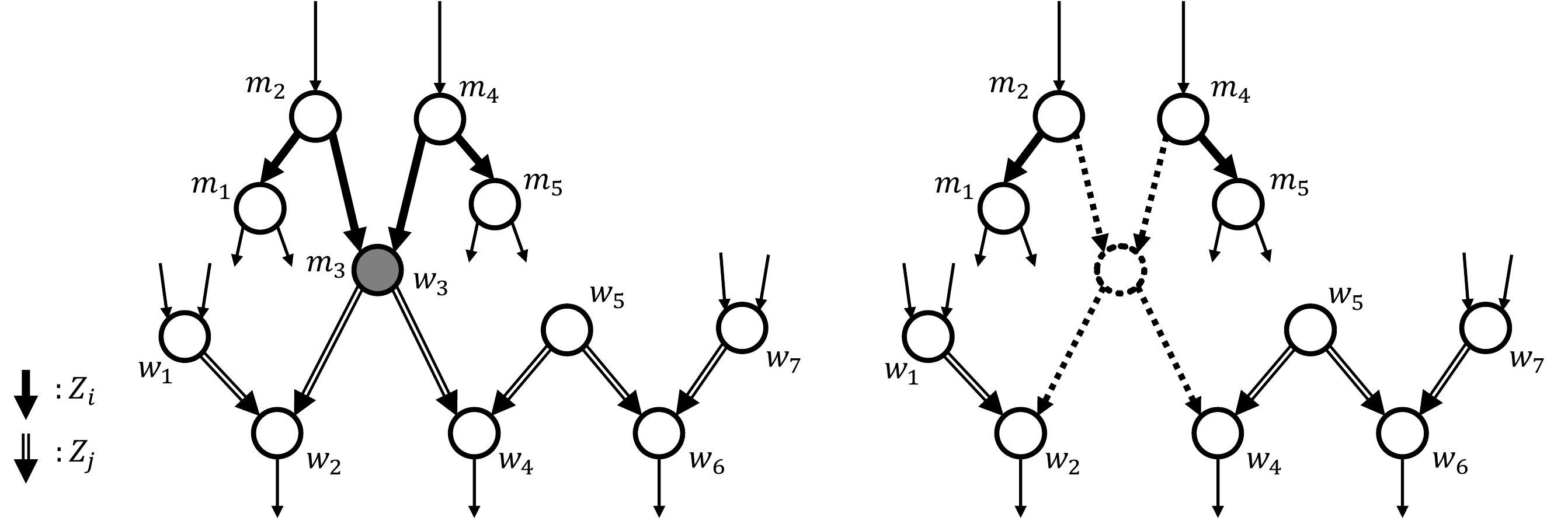}
  \caption{Case 4 in the proof of Lemma~\ref{change}.
 \label{fig:case4}
  }
\end{figure}

We now prove a special case of the main result.

\begin{proposition}\label{specialcase}
In the case of $|\mathcal{W}_N| =  1$, $\eta^\ast(N) = |\mathcal{W}_N|=1$ holds if and only if there exists an $\mathcal{W}_N$-saturated M-W matching of $N$. 
\end{proposition}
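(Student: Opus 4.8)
The plan is to prove the two directions separately, using Lemma~\ref{change} for the `if' direction and Proposition~\ref{the_spread_of_missing_vertices} together with a careful tracking argument for the `only if' direction. Throughout, we use that $\eta^\ast(N)\geq|\mathcal{W}_N|$ always holds by Theorem~\ref{infimum_of_eta}, so in the case $|\mathcal{W}_N|=1$ the assertion $\eta^\ast(N)=1$ is equivalent to $\eta^\ast(N)\leq 1$, i.e. to the existence of a covering subtree of $N$ omitting exactly one vertex.

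For the `if' direction, suppose $N$ has a $\mathcal{W}_N$-saturated M-W matching; since $|\mathcal{W}_N|=1$ this is just a single M-W pair $(Z_i,Z_j)$ with $Z_i\in\mathcal{M}_N$, $Z_j\in\mathcal{W}_N$. Fix any $v\in V_\ell(Z_i)\cap V_u(Z_j)$ and form $N':=N-\{v\}$ by resolving $(Z_i,Z_j)$ at $v$. By Lemma~\ref{change}, $N'$ is again a rooted almost-binary phylogenetic $X$-network with $|\mathcal{W}_{N'}|=0$, so by Proposition~\ref{spanning} $N'$ is tree-based and hence admits a subdivision tree $T'$, which in particular is a covering subtree of $N'$ using all of $V(N')=V(N)\setminus\{v\}$. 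The key point to check is that $T'$ is also a covering subtree of $N$: it is a subtree of $N$ (all its edges survive in $N$), it contains the root of $N$ (resolving an M-W pair does not touch the root, since the root is a lower vertex of no zig-zag trail and $v\in V_\ell(Z_i)$), and it contains every leaf of $N$ (leaves are lower vertices only, never upper vertices, so $v$ is not a leaf). Hence $\eta^\ast(N)\leq|V(N)|-|V(T')|=1$, and combined with Theorem~\ref{infimum_of_eta} we get $\eta^\ast(N)=1$.

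For the `only if' direction, suppose $\eta^\ast(N)=1$ and let $T$ be a maximum covering subtree, so $T$ omits exactly one vertex of $N$, say $V(N)\setminus V(T)=\{v\}$. Let $Z$ be the unique maximal W-fence of $N$. If $Z$ were not below any maximal M-fence of $N$, Proposition~\ref{the_spread_of_missing_vertices} would produce a maximal zig-zag trail $Z'$ above $Z$, not an M-fence, with an upper vertex $u\in V_u(Z')$, $u\notin V(T)$; combined with Lemma~\ref{upper_vertex_and_missing_vertex} applied to $Z$ itself (which forces an omitted upper vertex of $Z$), we would need to show these two omitted vertices are distinct, contradicting $|V(N)\setminus V(T)|=1$. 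The distinctness is the crux: the omitted upper vertex of $Z$ lies in $V_u(Z)$, while $u\in V_u(Z')$ with $Z'$ above $Z$, so if they coincided then $u\in V_u(Z)\cap V_u(Z')$; I would rule this out by analysing how the omitted vertex of $Z$ sits relative to the trail $\tilde Z$ chosen in the proof of Proposition~\ref{the_spread_of_missing_vertices} — concretely, re-run that argument starting from the omitted upper vertex of $Z$ and observe that the ascent it produces reaches an omitted upper vertex of a non-M-fence $\tilde Z$ strictly higher than $Z$, hence distinct from the original. Therefore $Z$ must be below some maximal M-fence $M\in\mathcal{M}_N$, i.e. there is a vertex in $V_\ell(M)\cap V_u(Z)$, which is precisely an M-W pair $(M,Z)$; since $|\mathcal{W}_N|=1$, the singleton $\{(M,Z)\}$ is a $\mathcal{W}_N$-saturated M-W matching.

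The main obstacle is the distinctness argument in the `only if' direction: Proposition~\ref{the_spread_of_missing_vertices} only guarantees \emph{some} omitted upper vertex of \emph{some} non-M-fence above $Z$, and one must argue it cannot be re-used as the omitted vertex of $Z$ guaranteed by Lemma~\ref{upper_vertex_and_missing_vertex}. I expect the clean way to handle this is to strengthen the bookkeeping: show that whenever $Z$ is a W-fence not below any M-fence, $T$ must omit at least one vertex of $Z$ \emph{and} at least one further upper vertex belonging to a strictly-higher trail, so the count of omitted vertices is at least two. This is essentially an induction up the `above-below' order, and care is needed because the same higher trail could in principle sit above $Z$ via several shared vertices; tracking a single witness vertex at each level, as in the proof of Proposition~\ref{the_spread_of_missing_vertices}, avoids over- or under-counting. (The general case $|\mathcal{W}_N|>1$ will require extending this to a matching-style argument, but for $|\mathcal{W}_N|=1$ the single-chain version suffices.)
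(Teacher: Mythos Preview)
Your approach matches the paper's proof essentially line for line: the `if' direction via Lemma~\ref{change} and Proposition~\ref{spanning}, and the `only if' direction (in contrapositive form) via Lemma~\ref{upper_vertex_and_missing_vertex} together with Proposition~\ref{the_spread_of_missing_vertices}.

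The distinctness worry you flag in the `only if' direction is legitimate, but it has a one-line resolution that makes your proposed ascent-and-bookkeeping argument unnecessary. Any two outgoing edges from a vertex $w$ share the tail $w$ and are therefore consecutive in a zig-zag trail; since the maximal zig-zag trails partition $E(N)$, both outgoing edges of $w$ lie in the \emph{same} maximal zig-zag trail. Hence every vertex is an upper vertex of at most one maximal zig-zag trail, so $V_u(Z)\cap V_u(Z')=\emptyset$ whenever $Z\neq Z'$. As the relation ``above'' in Definition~\ref{def:above-below} is only defined between distinct trails, the omitted upper vertex of the W-fence $Z$ and the omitted upper vertex $u\in V_u(Z')$ produced by Proposition~\ref{the_spread_of_missing_vertices} are automatically different, giving $\eta^\ast(N)\geq 2$ immediately. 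The paper itself glosses over this point and simply asserts that $T$ misses both vertices, so your instinct to check it was sound; the check is just easier than you feared.
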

\begin{proof}
Let $Z_j$ be the only maximal W-fence in $N$. 
We first show that if there exists exactly one $\mathcal{W}_N$-saturated M-W matching of $N$, then $\eta^\ast(N) = |\mathcal{W}_N|=1$ holds. 
Let  $(Z_i, Z_j)$ be the (only) M-W pair in $N$,  and let $N^\prime$ be the subgraph of $N$ obtained by resolving $(Z_i, Z_j)$ at a vertex $v\in V_\ell(Z_i)\cap V_u(Z_j)$. Lemma \ref{change} says that $N^\prime$ is a rooted almost-binary phylogenetic $X$-network and that $N^\prime$ has no maximal W-fence. Then, $N^\prime$ has a subdivision tree $T^\prime$ by Proposition \ref{spanning}. Since $N^\prime:=N-\{v\}$ is a subgraph of $N$,  there exists a subtree $T$ of $N$ such that $V(N)\setminus V(T)=\{v\}$ and  $T$ is isomorphic to $T^\prime$. 
As $T$ shares the same root and leaf-set $X$ with $N$, $T$ is a covering subtree of $N$ with $|V(N)|-|V(T)|=1$.  As Theorem \ref{infimum_of_eta} gives $1 = |\mathcal{W}_N| \leq \eta^\ast(N)$, we see that $T$ is a maximum covering subtree of $N$. 
Thus, $\eta^\ast(N) = 1$ holds.

To prove the converse, assume that there exists no $\mathcal{W}_N$-saturated M-W matching of $N$. This means that $N$ contains no M-W pair $(Z_i, Z_j)$ for the maximal W-fence $Z_j$. By Lemma~\ref{upper_vertex_and_missing_vertex}, there exists a vertex $v\in V_u(Z_j)$ that is not included in a maximum covering subtree $T$ in $N$. Then, by Proposition~\ref{the_spread_of_missing_vertices}, there exists a maximal zig-zag trail $Z$ above $Z_j$ with $v\in V_\ell(Z)$, where $Z$ is not an M-fence. Proposition~\ref{the_spread_of_missing_vertices} also states that there exists $u\in V_{u}(Z)$ with $u\not \in V(T)$. Thus, $T$ contains neither $u$ nor $v$. Hence, we obtain $\eta^\ast(N) \geq 2 > 1$ as desired.

\end{proof}

Finally, we give a characterisation of rooted almost-binary phylogenetic $X$-networks $N$ with  $\eta^\ast(N) = |\mathcal{W}_N|$ as follows.

\begin{theorem}\label{equality}
  For a rooted almost-binary phylogenetic $X$-network $N$,  $\eta^\ast(N) = |\mathcal{W}_N|$ holds if and only if there is no maximal W-fence or there exists an $\mathcal{W}_N$-saturated M-W matching of $N$.
\end{theorem}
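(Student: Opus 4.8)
The plan is to prove the two directions of the biconditional, with the "no maximal W-fence" case being immediate from Proposition~\ref{spanning} (then $\eta^\ast(N)=0=|\mathcal W_N|$), so the real work is the case $|\mathcal W_N|\geq 1$. The forward direction (if $\eta^\ast(N)=|\mathcal W_N|$ then a $\mathcal W_N$-saturated M-W matching exists) I would attack by contrapositive: suppose no $\mathcal W_N$-saturated M-W matching exists. By König's theorem on the bipartite ``above'' graph between $\mathcal M_N$ and $\mathcal W_N$ (vertices are M-fences and W-fences, edges are M-W pairs), failure of a $\mathcal W_N$-saturating matching yields, via Hall's condition, a set $\mathcal W' \subseteq \mathcal W_N$ whose M-fence neighbourhood is smaller than $\mathcal W'$. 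I would then argue that, for a maximum covering subtree $T$, the W-fences in $\mathcal W'$ together with the zig-zag trails strictly above them collectively force more than $|\mathcal W_N|$ missing vertices: intuitively, the ``deficiency'' of each W-fence in $\mathcal W'$ must be absorbed either by a missing upper vertex of that W-fence (contributing to the count) or pushed up into a trail above it, and Proposition~\ref{the_spread_of_missing_vertices} says that if it is pushed into a \emph{non-}M-fence it creates a fresh missing vertex there, while if it is absorbed by an M-fence then that M-fence is ``used up'' and cannot absorb the deficiency of a second W-fence simultaneously without itself losing a vertex. Making this accounting rigorous — tracking how deficiencies propagate upward through the DAG of zig-zag trails and ensuring no double-counting of missing vertices — is the main obstacle, and I expect to need a careful global counting argument summing $|V_\ell(Z)\cap V(T)| - |V_u(Z)\cap V(T)|$ over all $Z$, combined with Remark~\ref{rem:MW} and Proposition~\ref{number_of_U/L_vertices_in_MCST}.

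For the converse direction (if a $\mathcal W_N$-saturated M-W matching exists then $\eta^\ast(N)=|\mathcal W_N|$), the plan is induction on $|\mathcal W_N|$, generalising Proposition~\ref{specialcase}. Given a $\mathcal W_N$-saturated M-W matching $S=\{(M_1,W_1),\dots,(M_p,W_p)\}$ with $p=|\mathcal W_N|$, pick one pair $(M_1,W_1)$ and a vertex $v\in V_\ell(M_1)\cap V_u(W_1)$, and form $N'=N-\{v\}$ by resolving $(M_1,W_1)$ at $v$ (Definition~\ref{def:resolution}). By Lemma~\ref{change}, $N'$ is again a rooted almost-binary phylogenetic $X$-network with $|\mathcal W_{N'}|=|\mathcal W_N|-1$ and $|\mathcal M_{N'}|=|\mathcal M_N|-1$. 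The crucial step is to show that $S\setminus\{(M_1,W_1)\}$ (or a slight modification of it) remains a $\mathcal W_{N'}$-saturated M-W matching \emph{in $N'$}: I would verify that the remaining maximal M-fences $M_2,\dots,M_p$ and W-fences $W_2,\dots,W_p$ survive the deletion of $v$ essentially unchanged (since $v$ lies on $M_1$ and $W_1$, and the decomposition is unique by Theorem~\ref{uniquely.decomposable}), and that the ``above'' relations among them are preserved. Then by induction $\eta^\ast(N')=|\mathcal W_{N'}|=|\mathcal W_N|-1$, so $N'$ has a covering subtree $T'$ missing exactly $|\mathcal W_N|-1$ vertices; lifting $T'$ to a subtree $T$ of $N$ missing exactly those vertices plus $v$ gives a covering subtree of $N$ missing $|\mathcal W_N|$ vertices, and Theorem~\ref{infimum_of_eta} forces this to be optimal, so $\eta^\ast(N)=|\mathcal W_N|$.

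The subtle point in the converse is whether the chosen vertex $v$ of the pair $(M_1,W_1)$ can be selected so that the remaining matching pairs are genuinely untouched; a potential complication is that $M_1$ might coincide with some $M_j$ for the ``above'' relation through a different shared vertex, but the matching condition $M_i\neq M_j$ rules this out, and $v$ being a vertex of $W_1$ only (among the $W_i$'s) means the other W-fences are unaffected. I would also need to handle the edge case where resolving $(M_1,W_1)$ could in principle create a \emph{new} W-fence, but Lemma~\ref{change} explicitly precludes this ($|\mathcal W_{N'}|=|\mathcal W_N|-1$ exactly), so the induction is clean. Thus the overall structure is: trivial case via Proposition~\ref{spanning}; converse via induction on $|\mathcal W_N|$ using Lemma~\ref{change} and stability of the matching; forward direction via contrapositive, Hall/König on the M-W bipartite graph, and a global vertex-counting argument driven by Proposition~\ref{the_spread_of_missing_vertices} and Proposition~\ref{number_of_U/L_vertices_in_MCST} — the last being where the real difficulty lies.
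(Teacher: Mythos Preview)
Your converse direction (saturated matching $\Rightarrow$ $\eta^\ast(N)=|\mathcal W_N|$) is the paper's own induction via Lemma~\ref{change}; you are rightly more careful than the paper about why the residual matching survives in $N'$ (since $v\in V_\ell(M_1)\cap V_u(W_1)$ has all its incident edges in $E(M_1)\cup E(W_1)$, the remaining $M_k,W_k$ and their witnessing vertices persist unchanged as maximal trails of $N'$).

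Where you diverge is the forward direction. The paper argues by a local two-case split --- either some W-fence has no M-fence above it (then Proposition~\ref{the_spread_of_missing_vertices} yields an extra uncovered vertex), or some single M-fence sits above two distinct W-fences (then its lower-vertex budget is overcommitted and it too loses an upper vertex) --- whereas you route through Hall's condition and a global sum of $|V_\ell(Z)\cap V(T)|-|V_u(Z)\cap V(T)|$. Your counting idea is in fact strong enough to bypass Hall and is tighter than the paper's case split: assuming $\eta^\ast(N)=|\mathcal W_N|$, set $m_u(Z):=|V_u(Z)\setminus V(T)|$ and note $\eta^\ast(N)=\sum_Z m_u(Z)$ (every missing vertex is non-leaf, hence upper in exactly one trail), which forces $m_u(W)=1$ for every $W\in\mathcal W_N$ and $m_u(Z)=0$ otherwise. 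Each missing vertex is also non-root, hence lower in a unique $Z'$; rewriting Proposition~\ref{number_of_U/L_vertices_in_MCST} as $m_\ell(Z')\le m_u(Z')+(|V_\ell(Z')|-|V_u(Z')|)$ and applying Proposition~\ref{four_types_of_zig-zag_trail} forces $Z'\in\mathcal M_N$ with $m_\ell(Z')\le 1$, so the assignment $W\mapsto Z'$ is injective and is already the desired $\mathcal W_N$-saturated matching. The contrapositive detour through a Hall-violating set is thus unnecessary; the global count you propose delivers the matching directly.
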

\begin{proof}
As the case of $|\mathcal{W}_N| = 0$ is trivial, we may assume $|\mathcal{W}_N| \geq 1$. The goal is to show that $\eta^\ast(N) = |\mathcal{W}_N|$ holds if and only if there exists an $\mathcal{W}_N$-saturated M-W matching of $N$. 
The proof is by induction on the number $|\mathcal{W}_N|$ of maximal W-fences of $N$. In the case of $|\mathcal{W}_N| = 0$, the assertion holds by Proposition~\ref{spanning}. When $|\mathcal{W}_N| = 1$, the assertion holds by Proposition~\ref{specialcase}. 

Assuming that Theorem~\ref{equality} holds for any $N^\prime$ with $|\mathcal{W}_{N^\prime}|\leq i$, we will show that the assertion holds for any $N$ with  $|\mathcal{W}_N| = i + 1$.
%
%
Suppose there exists a $\mathcal{W}_N$-saturated M-W matching of $N$.  Let $(Z_i, Z_j)$ be an element of $\mathcal{W}_N$-saturated M-W matching of $N$ and let $N^\prime$ be an almost-binary phylogenetic $X$-network  with $|\mathcal{W}_{N^\prime}| = i$ obtained by the resolution of $(Z_i, Z_j)$  (Lemma~\ref{change} is used here). Then, by the induction hypothesis, we have $\eta^\ast  (N ^\prime ) = i$. Let $T^\prime$ be a maximum covering subtree $T^\prime$ of $N^\prime$.  Similarly to the argument in the proof of Proposition~\ref{specialcase}, $N$ contains a covering subtree $T$ that is isomorphic to $T^\prime$.  As $V(N)\setminus V(N^\prime)=\{v\}$, the number of vertices of $N$ that are not covered by $T$ equals $i + 1$. As Theorem \ref{infimum_of_eta} guarantees
$i+1 = |\mathcal{W}_N| \leq \eta^\ast(N)$, we can conclude that $T$ is a maximum covering subtree of $N$ and $\eta^\ast (N)=i+1$ holds. 



To prove the converse, assuming that there is no $\mathcal{W}_N$-saturated M-W matching of $N$, we will show that  $\eta^\ast(N) >|\mathcal{W}_N| (= i+1)$ holds. Let $Z_j$ be a maximal W-fence that is not in any M-W pair in $N$.  Then, by Lemma~\ref{upper_vertex_and_missing_vertex}, there exists an upper vertex $v$ of $Z_j$ with $v\not \in V(T)$ for any maximum covering subtree  $T$ of $N$.
 Let $Z$ be the maximal zig-zag trail in $N$ that is above $Z_j$ and has $v$ as its lower vertex.
When $Z$ is not an M-fence, Proposition~\ref{the_spread_of_missing_vertices} says that there exists a vertex $u \in V_{u} (Z)$ with $u \not \in V(T)$, which implies $\eta^\ast (N)\geq i+2$. Next, we must consider the case when $Z$ is an M-fence.  In fact, even if there is no $\mathcal{W}_N$-saturated M-W matching, it is possible that there exists a maximal M-fence above each maximal W-fence. This is because a maximal M-fence can be above two or more maximal W-fences (see the left panel of Figure~\ref{fig:W-saturated} for an example). 
Let $Z$ be a maximal M-fence that is above two distinct maximal W-fences $Z_j$ and $Z_j^\prime$ in $N$ and let $k$ be the number of upper vertices of $Z$. According to Proposition~\ref{four_types_of_zig-zag_trail}, the number of lower vertices of $Z$ is given by $k+1$. Lemma~\ref{upper_vertex_and_missing_vertex} ensures the existence of two distinct vertices $p \in V_{u}(Z_j)$ and $q \in V_{u}(Z_j^\prime)$ such that neither $p$ nor $q$ is in $V(T)$. Since $p, q \in V_{\ell}(Z)$, we have $|V_\ell(Z) \cap V(T)| \leq k+1-2 = k-1$.
Proposition~\ref{number_of_U/L_vertices_in_MCST} yields $|V_u(Z)\cap V(T)| \leq |V_\ell(Z) \cap V(T)|$, leading to $|V_u(Z)\cap V(T)|\leq k-1$. Thus, at least one upper vertex of $Z$ is not in $V(T)$. Consequently, we deduce that $\eta^\ast (N)\geq i+2$.
This completes the proof. 
\end{proof}




\section{Conclusion}
In this paper, we have considered the number $\eta^\ast(N)$ of uncovered vertices in a phylogenetic network $N$ by a new approach using a canonical decomposition of phylogenetic networks called  the maximal zig-zag trail decomposition. We defined an above/below relationship between maximal zig-zag trails by defining the concept of upper and lower vertices of each maximal zig-zag trail, and then introduced the new concept of M-W pairs and $\mathcal{W}_N$-saturated M-W matchings. We gave a tight lower bound  $|\mathcal{W}_N|\leq \eta^\ast(N)$ and proved that $|\mathcal{W}_N|=\eta^\ast (N)$ holds if and only if there exists a $\mathcal{W}_N$-saturated M-W matching in $N$. 

The results in this paper have many different implications. First, as $|\mathcal{W}_N|$ can be computed in linear time \cite{structure_theorem}, Theorem~\ref{equality} means that MCSP can be solved in linear time for (binary or) almost-binary phylogenetic networks with $\mathcal{W}_N$-saturated M-W matching. As the algorithm in \cite{Maximum_Covering} requires cubic time when $N$ is binary, our decomposition-based approach is faster when $N$ is binary or almost-binary. Second, since tree-based networks $N$ are a trivial instance of MCSP (because $|\mathcal{W}_N|=\eta^\ast (N)=0$), phylogenetic networks with $\mathcal{W}_N$-saturated M-W matchings can be understood as a generalisation of tree-based networks. Recalling $\delta^\ast(N)=|\mathcal{W}_N|$~\cite{structure_theorem}, we have thus  clarified the relationship between the two indices $\delta^\ast(N)$ and $\eta^\ast (N)$ for measuring the deviation of $N$ from being tree-based, which were originally proposed by Francis--Semple--Steel  \cite{New_characterisations}. For future work, it would be interesting to investigate the mathematical and computational properties of phylogenetic networks with or without $\mathcal{W}_N$-saturated M-W matching.



\bibliography{takatora.bib}

\end{document}